\theoremstyle{plain}
\newtheorem{theorem}{Theorem}[section]
\newtheorem{proposition}[theorem]{Proposition}
\newtheorem{corollary}[theorem]{Corollary}
\newtheorem{def-thm}[theorem]{Definition-Theorem}
\newtheorem{lemma}[theorem]{Lemma}
\theoremstyle{definition}
\newtheorem{definition}[theorem]{Definition}
\newtheorem{remark}[theorem]{Remark}
\newtheorem{conjecture}[theorem]{Conjecture}
\newcommand{\PP}{\mathbb{P}}
\newcommand{\RR}{\mathbb{R}}
\newcommand{\NN}{\mathbb{N}}
\newcommand{\ZZ}{\mathbb{Z}}
\newcommand{\CC}{\mathbb{C}}
\newcommand{\QQ}{\mathbb{Q}}
\newcommand{\OO}{{\mathcal O}}
\newcommand{\II}{{\mathcal I}}
\DeclareMathOperator{\ord}{ord}
\DeclareMathOperator{\divisor}{div}
\DeclareMathOperator{\image}{image}
\DeclareMathOperator{\Pic}{Pic}
\begin{document}

\title[On essentially large divisors]{On essentially large divisors}

\author{Gordon Heier}
\author{Min Ru}

\address{Department of Mathematics\\University of Houston\\4800 Calhoun Road, Houston, TX 77204\\USA}

\email{heier@math.uh.edu}
\email{minru@math.uh.edu}

\subjclass[2000]{11G35, 11G50, 14C20, 14G40, 32H30}

\begin{abstract}
Motivated by the classical Theorems of Picard and Siegel and their generalizations, we define the notion of an {\it essentially large} effective divisor and derive some of its geometric and arithmetic consequences. We then prove that on a nonsingular projective variety $X$ whose codimension is no greater than $\dim X-2$, every effective divisor with $\dim X +2$ or more components in general position is essentially large. 
\end{abstract}

\maketitle

\section{Introduction}
In \cite{cz_cr}, Corvaja and Zannier found an innovative way of using Schmidt's Subspace Theorem to give a new proof of the classical Theorem of Siegel on integral points on affine varieties. They subsequently expanded their approach to obtain certain results on integral points in higher dimensions (\cite{cz_crelle}, \cite{cz_annals}, \cite{cz_ajm}, \cite{cz_imrn}). The approach was translated to Nevanlinna theory in \cite{ru_ajm}.\par
Evertse and Ferretti (\cite{ferretti_compositio}, \cite{ef_imrn}) used similar (yet more general) arguments to obtain diophantine inequalities on projective varieties. Their approach is largely based on Mumford's degree of contact. In Nevanlinna theory, this approach was used by Ru \cite{ru_annals} to establish a Second Main Theorem for holomorphic curves into projective varieties intersecting hypersurfaces.\par
As stated in \cite[p.\ 609]{levin_annals}, the article \cite{cz_cr} motivated Levin to make the following Definition. Its significance is due to the hyperbolicity-type and Mordell-type properties enjoyed by the complements of large divisors. We let $K$ denote either a number field or $\CC$.
\begin{definition}[{\cite[Definition 8.1]{levin_annals}}]
Let $D$ be an effective divisor on a nonsingular projective variety $X$ defined over $K$. Then $D$ is said to be {\it very large} if for every $P\in D(\bar K)$, there exists a basis $B$ of the finite-dimensional vector space
$$L(D)=\{f \text{\ rational function on\ } X| \divisor(f) \geq -D\}$$
such that $\ord_E \prod_{f\in B} f > 0$ for every irreducible component $E$ of $D$ with $P\in E(\bar K)$. Moreover, an effective divisor is said to be {\it large} if it has the same support as some very large divisor.
\end{definition}
Recall that for an effective divisor $D$ as in the above Definition, the finite-dimensional vector spaces $L(D)$ and $H^0(X,\OO_X(D))$ are isomorphic via
\begin{equation}H^0(X,\OO_X(D))\to L(D),\quad s\mapsto \frac s {s_D},\label{h0_isom}\end{equation}
where $s_D$ is a section of $\OO_X(D)$ with $\divisor (s) =D$.\par
For a divisor $D=\sum_{i=1}^{r} D_i$ with $D_i$ effective (but not necessarily irreducible), Levin's main sufficient criterion for very largeness is the following.
\begin{lemma}[{\cite[Lemma 9.1]{levin_annals}}]\label{Levin_9_1} Let $D=\sum_{i=1}^{r} D_i$ be a divisor with each $D_i$ effective (but not necessarily irreducible) on a nonsingular projective variety $X$, all defined over $K$. For $P\in D(\bar K)$, let $D_P=\sum_{\{i:P\in D_i\}}D_i$. Also, for integers $m,n$, let
$$f_P(m,n)=h^0(X,\OO_X(nD-mD_P))-h^0(X,\OO_X(nD-(m+1)D_P)).$$
If there exists $n>0$ such that $\sum_{m=0}^\infty (m-n)f_P(m,n) > 0$ for all $P\in D(\bar K)$, then $nD$ is very large.
\end{lemma}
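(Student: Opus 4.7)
The plan is to construct, for every $P\in D(\bar K)$, a basis of $L(nD)$ adapted to the filtration of $H^0(X,\OO_X(nD))$ by the subspaces $H^0(X,\OO_X(nD-mD_P))$. The hypothesis $\sum_{m}(m-n)f_P(m,n)>0$ will then translate directly into the required positive lower bound on $\ord_E\prod_{f\in B}f$.

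First, I would fix a section $s_{D_P}\in H^0(X,\OO_X(D_P))$ with $\divisor(s_{D_P})=D_P$. Multiplication by $s_{D_P}^m$ embeds $V_m:=H^0(X,\OO_X(nD-mD_P))$ as a subspace of $V_0:=H^0(X,\OO_X(nD))$, yielding a decreasing filtration $V_0\supseteq V_1\supseteq\cdots$ which is eventually zero (since $P\in D$ forces $D_P$ to be nonzero effective). Because multiplication by $s_{D_P}$ is injective on sections, the successive quotients satisfy $\dim(V_m/V_{m+1})=f_P(m,n)$. Choose complements $V_m=W_m\oplus V_{m+1}$ and concatenate bases of the $W_m$ to obtain a basis $B_0$ of $V_0$ containing exactly $f_P(m,n)$ elements at each ``level'' $m$, i.e.\ in $V_m\setminus V_{m+1}$. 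Via the isomorphism \eqref{h0_isom}, $B:=\{s/s_{nD}:s\in B_0\}$ is then the candidate basis of $L(nD)$.

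The key observation is that for every irreducible component $E$ of $D$ passing through $P$, the multiplicity $\mu_E(D_P)$ coincides with $\mu_E(D)$: whenever $\mu_E(D_i)>0$ one has $E\subseteq D_i$, hence $P\in D_i$, so $D_i$ is a summand of $D_P$ and contributes fully to $\mu_E(D_P)$. Writing $\mu>0$ for this common value, any $s$ at level $m$ factors as $s=s'\cdot s_{D_P}^m$ with $s'\in H^0(X,\OO_X(nD-mD_P))$, giving $\ord_E(s)\ge m\mu$ and therefore $\ord_E(s/s_{nD})\ge(m-n)\mu$. Summing over $B$ yields
$$\ord_E\prod_{f\in B}f\;\ge\;\mu\sum_{m=0}^\infty(m-n)f_P(m,n)\;>\;0,$$
which is precisely what very largeness of $nD$ demands.

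The argument is essentially formal once the filtration and its dimension count are in place, and the hypothesis has evidently been engineered so that the weighted sum matches. The only mildly subtle point is the multiplicity equality $\mu_E(D_P)=\mu_E(D)$: one might initially worry that replacing $D$ by $D_P$ loses mass along $E$, but every $D_i$ contributing positive multiplicity along $E$ automatically contains $P$ and is therefore retained, so no obstacle remains.
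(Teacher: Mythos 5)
Your proposal is correct and follows essentially the same filtration argument as the paper's (very terse) proof sketch: build the decreasing filtration $V_m=H^0(X,\OO_X(nD-mD_P))$ inside $V_0=H^0(X,\OO_X(nD))$, choose a basis adapted to it, and use $\dim(V_m/V_{m+1})=f_P(m,n)$ to convert the hypothesis into the required order bound. You have in addition spelled out the one genuinely non-obvious point that the paper leaves implicit, namely that $\ord_E D_P=\ord_E D$ for every component $E$ of $D$ through $P$ (so that the $m\cdot\ord_E D_P$ gained from the filtration level can be compared directly against the $n\cdot\ord_E D$ lost from dividing by $s_{nD}$); your justification of this equality is correct.
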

The short proof provided in \cite{levin_annals} is based on the filtration argument introduced in \cite{cz_cr}. The main idea is to define a filtration $$V_j=H^0(X,\OO_X(nD-jD_P)) \quad (j=0,1,2,3,\ldots)$$
of $H^0(X,\OO_X(nD))$ and to choose a basis $f_1,\ldots ,f_{h^0(X,\OO_X(nD))}$ of $L(nD)$ according to this filtration, beginning with the last nonzero subspace. Since $\dim V_j/V_{j+1}=f_P(j,n)$, we get
$$\ord_E \prod_{i=1}^{h^0(X,\OO_X(nD))} f_i \geq (\ord_E D)\sum_{m=0}^{\infty} (m-n)f_P(m,n) > 0.$$\par
The main result on large divisors in \cite{levin_annals} is the following.
\begin{theorem}[\cite{levin_annals}]\label{levin_2mq_thm}
Let $X$ be a $q$-dimensional nonsingular projective variety, defined over $K$. Let $D=\sum_{i=1}^{r} D_i$ be a divisor, also defined over $K$, with each $D_i$ effective (but not necessarily irreducible) and big and nef. Moreover, assume that every irreducible component of $D$ is nonsingular and that the intersection of any $m+1$ distinct $D_i$ is empty over $\bar K$. If $r>2mq$, then $D$ is large.
\end{theorem}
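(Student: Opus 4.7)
The plan is to verify the hypothesis of Lemma~\ref{Levin_9_1} directly for $D$: find an integer $n>0$ such that for every $P\in D(\bar K)$,
\[
\sum_{j\geq 1} h^0(X,\OO_X(nD-jD_P)) > n\cdot h^0(X,\OO_X(nD)).
\]
Granted this, Lemma~\ref{Levin_9_1} gives that $nD$ is very large, and since $nD$ and $D$ share the same support, $D$ is large. Fix $P$, let $S_P=\{i:P\in D_i\}$ and $s=|S_P|$; the general-position hypothesis forces $s\leq m$. Write $D_P=\sum_{i\in S_P}D_i$ and $D'=D-D_P=\sum_{i\notin S_P}D_i$, both of which are nonnegative sums of big and nef divisors, hence themselves big and nef.

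For $0\leq j\leq n$ the divisor $nD-jD_P=(n-j)D+jD'$ is a nonnegative combination of big$+$nef divisors, hence itself big and nef. Asymptotic Riemann--Roch yields $h^0(X,\OO_X(nD-jD_P))=(nD-jD_P)^q/q!+O(n^{q-1})$ uniformly in $j$, and an analogous volume-based estimate extends to $j>n$ as long as $nD-jD_P$ remains pseudo-effective. Interpreting the sums as Riemann sums in the variable $t=j/n$ yields, asymptotically as $n\to\infty$,
\[
\sum_{j\geq 1} h^0(X,\OO_X(nD-jD_P)) = \frac{n^{q+1}}{q!}\int_0^\infty \vol(D-tD_P)\,dt + o(n^{q+1}),
\]
while $n\cdot h^0(X,\OO_X(nD))=(n^{q+1}/q!)\,D^q+o(n^{q+1})$. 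The target inequality therefore reduces, for $n$ sufficiently large, to the intersection-theoretic condition $\int_0^\infty \vol(D-tD_P)\,dt>D^q$.

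To establish this, the plan is to use that $\vol^{1/q}$ is concave on the big cone: the chord bound gives $\vol(D-tD_P)\geq D^q(1-t/t^*)^q$ for $0\leq t\leq t^*$, where $t^*=\sup\{t\geq 0:D-tD_P\text{ is pseudo-effective}\}$. Integration yields $\int_0^\infty\vol(D-tD_P)\,dt\geq D^q\cdot t^*/(q+1)$, so it suffices to force $t^*$ to exceed an explicit constant (depending on $q$) that, together with the lower-order Riemann--Roch errors, still absorbs into the threshold. The general-position hypothesis kills every mixed intersection $D_{i_1}\cdots D_{i_q}$ in which the indices take more than $m$ distinct values, allowing one to expand $D^q=(\sum_i D_i)^q$ and the intersection numbers controlling $t^*$ as sums indexed by subsets of size at most $m$. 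Since $D$ has $r$ components but at most $m$ of them lie in $S_P$, the bigness and nefness of each $D_i$ combined with the general-position hypothesis forces $t^*$ to grow linearly in $r/m$, and careful bookkeeping recovers the stated threshold $r>2mq$.

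The main obstacle is this final step: converting the intersection-theoretic inequality into the clean combinatorial condition $r>2mq$. This requires a detailed multilinear expansion of $D^q=(\sum_i D_i)^q$ and of $(D-tD_P)^q$ controlled by the general-position hypothesis, careful handling of those mixed and self-intersections that are \emph{not} killed by general position (in particular the terms $D_i^q$ and $D_i^{q-1}D_j$ with $i,j\in S_P$), and verification that the volume estimates extend correctly beyond the big-and-nef range via pseudo-effectivity. The factor $2q$ in $2mq$, rather than the seemingly sharper $q+1$ suggested by the equi-numerical case, reflects the slack incurred by using the chord lower bound together with the absorption of the lower-order Riemann--Roch errors.
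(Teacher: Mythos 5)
Your reduction to Lemma~\ref{Levin_9_1}, and the telescoping identity $\sum_{m\geq 0}(m-n)f_P(m,n)=\sum_{j\geq 1}h^0(X,\OO_X(nD-jD_P))-n\,h^0(X,\OO_X(nD))$, are both correct, and passing to the volume function via asymptotic Riemann--Roch is a sensible idea. But the step on which everything hinges---forcing the pseudo-effective threshold $t^{*}=\sup\{t\geq 0: D-tD_P \ \text{pseudo-effective}\}$ to exceed $q+1$ so that the chord bound yields $\int_0^{\infty}\vol(D-tD_P)\,dt\geq D^q t^{*}/(q+1)>D^q$---simply does not follow from the hypotheses. The general-position condition constrains only the \emph{supports} of the $D_i$, and ``big and nef'' puts no bound on their relative numerical sizes, so a single component of $D_P$ can numerically dominate $D'=D-D_P$. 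Concretely, on $X=\PP^1\times\PP^1$ ($q=2$) take smooth curves $D_1\sim a(1,1)$ and $D_2,\ldots,D_r\sim(1,1)$ in general position (so $m=2$). For $P\in D_1\cap D_2$ one has $D_P\sim(a+1)(1,1)$, $D\sim(a+r-1)(1,1)$, hence $t^{*}=(a+r-1)/(a+1)\to 1$ as $a\to\infty$, no matter how large $r$ is; since $\vol$ is literally the monomial $(a+r-1-t(a+1))^2$ here, the chord bound is exact and $\int_0^\infty\vol(D-tD_P)\,dt=D^q t^{*}/3<D^q$. So $t^{*}$ does \emph{not} grow like $r/m$, and your announced remedy---a multilinear expansion controlled by general position---cannot save it: general position only kills products $D_{i_1}\cdots D_{i_{m+1}}$ of \emph{distinct} indices, and says nothing about $D_{i_0}^q$ or $D_{i_0}^{q-1}D_j$, which are exactly the terms that blow up in this example.

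This is also a genuinely different route from the one the paper sketches for Levin's theorem. There, the differences $f_P(m,n)$ are bounded \emph{from above} by $h^0$'s on the restrictions to the (irreducible, nonsingular) components of $D_P$ via the $H^0$-portion of the long exact cohomology sequence, giving bounds of order $n^{q-1}$ multiplied by the number of such components (at most $m$), and one never passes through the pseudo-effective threshold. Finally, a small sanity check against your closing remark: in the model case $X=\PP^q$ with all $D_i$ hyperplanes the chord bound is already exact and yields $r>s(q+1)\leq q(q+1)$, so the chord estimate is not what accounts for the factor of $2$ in the threshold $2mq$.
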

In the proof of this Theorem given in \cite{levin_annals}, the difference $h^0(X,\OO_X(nD-mD_P))-h^0(X,\OO_X(nD-(m+1)D_P))$ is bounded from above based on the $H^0$-part of the corresponding long exact cohomology sequence. As remarked by Levin, using the double filtration argument from \cite[Lemma 3.2]{cz_annals}, the factor of $2$ in the lower bound on $r$ can be removed.\par
It would of course be interesting to decrease the lower bound on $r$ in Theorem \ref{levin_2mq_thm}. In light of the classical Theorems of Picard and Siegel and their generalizations, and some of the standard conjectures in hyperbolicity theory, one might suspect that when $m=q$, the lower bound of $2mq$ can be replaced by $q+1$. However, it is quite clear that the sufficient criteria in \cite{levin_annals} cannot be used to prove this. To be precise, we make the following Remark.
\begin{remark}\label{P_q_suff_crit_remark}
In the situation of Lemma \ref{Levin_9_1}, let $X=\PP^q$. Let $D$ be the sum of $r$ general smooth hypersurfaces of degree $d$. Then
$$\sum_{m=0}^\infty (m-n)f_P(m,n) > 0\quad (n \text{\ sufficiently large})$$
if and only if $r> q^2+q$. 
\end{remark}
We leave the proof of Remark \ref{P_q_suff_crit_remark} as an exercise to the reader. Note that since $\Pic(\PP^q)$ is generated by $\OO_{\PP^q}(1)$ and since $h^0(\PP^q,\OO_{\PP^q}(\ell))={q+\ell\choose \ell}$, the dimensions of all the $H^0$'s involved can be computed without problem.\par
We now turn to the new results presented in this note. The starting point is our definition of {\it essentially (very) large} divisors as follows.
\begin{definition}\label{ess_v_l_def}
Let $D$ be an effective divisor on a nonsingular projective variety $X$, all defined over $K$. Then we define $D$ to be {\it essentially very large} if there exists a linear subspace $V$ of $L(D)$ such that for every $P\in D(\bar K)$, there exists a basis $B$ of $V$ such that $\ord_E \prod_{f\in B} f > 0$ for every irreducible component $E$ of $D$ with $P\in E(\bar K)$. Moreover, we define an effective divisor to be {\it essentially large} if it has the same support as some essentially very large divisor.
\end{definition}
This is obviously a weakening of the notion of large divisors, which requires $V=L(D)$. However, our definition seems technically easier to handle, while it retains the important function theoretic and number theoretic consequences of the original definition of large divisors derived in \cite{levin_annals}. In fact, in Section \ref{section_nevan}, we prove new inequalities of Second Main Theorem-type and Schmidt Subspace Theorem-type for essentially large divisors that are actually stronger than the consequences for large divisors derived in \cite{levin_annals}. Since the statements of our Theorems require some (standard) definitions, we simply refer the reader to Theorem \ref{our_smt} and Theorem \ref{our_schmidt} at this point of the Introduction.\par
As a matter of convention, we will always assume that the constant function $1$ is an element of the basis $B$, although it does not contribute to the vanishing of the product.
\begin{remark}
The reader might wonder how essential very largeness fits in with the standard notions of ``size'' for a divisor in algebraic geometry. In this paper, we will not make any statements in this respect, other than the trivial observation that a basis for $V$ yields a nonconstant rational map to projective space whose locus of indeterminacy (i.e., the base locus of $V$ understood as a subspace of $H^0(X,\OO_X(D))$) is contained in $D$.
\end{remark}
In a second line of inquiry, beginning in Section \ref{section_ess_v_l_proofs}, it is clearly interesting to ask for (sharp) sufficient criteria for a divisor to be essentially large. In this direction, we first discovered that the filtration method from \cite{cz_ajm} (see also \cite{ru_ajm}) yields the expected sharp bound (even for the original version of largeness) in the case of $X=\PP^q$ as stated in the following Theorem. (Cf. our comments in Remark \ref{P_q_suff_crit_remark}.) 
\begin{theorem}\label{thm_1}
Let $q\geq 1$ and $r\geq q+2$ be integers. On $\PP^q$, let $D=\sum_{i=1}^r D_i$ be a divisor defined over $K$, where each $D_i$ is a hypersurface (not necessarily irreducible or reduced). Assume that the $D_i$ are in general position, i.e., the intersection of any $q+1$ of them is empty over $\bar K$. Then $D$ is large.
\end{theorem}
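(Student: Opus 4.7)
The plan is to reduce to an equal-degree situation by weighting the summands, and then apply the multi-index filtration method of \cite{cz_ajm} and \cite{ru_ajm}. Set $d_i = \deg D_i$, $a_i = \prod_{k \neq i} d_k$, and $D' := \sum_{i=1}^r a_i D_i$; this is an effective divisor with the same support as $D$ satisfying $a_i d_i = d^\ast := \prod_k d_k$ for every $i$, so $\deg D' = r d^\ast$. Because largeness depends only on the support, it suffices to produce a positive integer $n$ for which $nD'$ is very large. Via the isomorphism \eqref{h0_isom} applied to $nD'$, that reduces to finding, for each $P \in D(\bar K)$, a basis $g_1, \ldots, g_M$ of $H^0(\PP^q, \OO(n \deg D'))$ satisfying $\sum_{j=1}^M \ord_E(g_j) > \ord_E(nD') \cdot M$ for every irreducible component $E$ of $D$ with $P \in E(\bar K)$.

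Fix such a $P$ and let $E_1, \ldots, E_t$ be the irreducible components of $D$ through $P$, defined by the distinct irreducible polynomials $\tilde F_1, \ldots, \tilde F_t$. Since $r \geq q+2 \geq 3$, general position prevents any two of the $D_i$ from sharing an irreducible component (a common component would force nonempty triple intersections), so each $E_\beta$ is a component of a unique $D_{i_\beta}$ with multiplicity $m_\beta$; in particular $\ord_{E_\beta}(D') = a_{i_\beta} m_\beta$ and $m_\beta \deg E_\beta \leq d_{i_\beta}$. I would consider the multi-index filtration
$$V_\mathbf{a} \;=\; \tilde F_1^{a_1} \cdots \tilde F_t^{a_t} \cdot H^0\!\left(\PP^q, \OO\!\left(n \deg D' - \textstyle\sum_\beta a_\beta \deg E_\beta\right)\right), \qquad \mathbf{a} \in \NN^t,$$
of $H^0(\PP^q, \OO(n \deg D'))$. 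Pairwise coprimality of the $\tilde F_\beta$ in the UFD $K[x_0, \ldots, x_q]$ yields $\dim V_\mathbf{a} = h^0(\PP^q, \OO(n \deg D' - \sum_\beta a_\beta \deg E_\beta))$ and $V_\mathbf{a} \cap V_{\mathbf{a}'} = V_{\max(\mathbf{a}, \mathbf{a}')}$. Inductively taking complements $C_\mathbf{a}$ to $\sum_{\mathbf{a}' > \mathbf{a}} V_{\mathbf{a}'}$ inside $V_\mathbf{a}$ produces a basis of $H^0(\PP^q, \OO(n \deg D'))$ labeled by $\NN^t$, with the property that $V_\mathbf{a}$ is spanned by those basis vectors whose label dominates $\mathbf{a}$ componentwise. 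A basis vector $g$ with label $\mathbf{a}$ automatically satisfies $\ord_{E_\beta}(g) \geq a_\beta$, and a summation by parts yields, simultaneously for every $\beta$,
$$\sum_{j=1}^M \ord_{E_\beta}(g_j) \;\geq\; \sum_{k \geq 1} \dim V_{k e_\beta} \;=\; \sum_{k \geq 1} h^0\!\left(\PP^q, \OO(n \deg D' - k \deg E_\beta)\right).$$

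The final step is an asymptotic comparison. As $n \to \infty$, the right-hand side above equals $(n \deg D')^{q+1}/(\deg E_\beta \,(q+1)!) + O(n^q)$, while $\ord_{E_\beta}(nD') \cdot M = n a_{i_\beta} m_\beta \cdot (n \deg D')^q/q! + O(n^q)$. Their leading-order ratio is
$$\frac{\deg D'}{a_{i_\beta} m_\beta \deg E_\beta \,(q+1)} \;\geq\; \frac{\deg D'}{a_{i_\beta} d_{i_\beta}\,(q+1)} \;=\; \frac{r d^\ast}{d^\ast (q+1)} \;=\; \frac{r}{q+1},$$
which is strictly greater than $1$ exactly when $r \geq q+2$. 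Because only finitely many configurations $(E_\beta, m_\beta, i_\beta)$ can arise as $P$ varies over $D(\bar K)$, a single $n$ makes the strict inequality $\sum_j \ord_{E_\beta}(g_j) > \ord_{E_\beta}(nD') \cdot M$ hold uniformly for every $P$ and every $\beta$, so $nD'$ is very large and $D$ is large. The main subtlety I expect is the choice of weighting: running the same argument directly with $D$ (or with any uniform scaling) gives a ratio of the form $\sum_i d_i / ((q+1) d_j)$, which can fall below $1$ when a single $d_j$ is much larger than the others even though $r \geq q+2$; the weighting $a_i = \prod_{k \neq i} d_k$ precisely balances $a_i d_i$ to the common value $d^\ast$ and recovers the sharp threshold.
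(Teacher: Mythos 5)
Your overall strategy (re-weight to equal degrees, set up a filtration by vanishing along divisors through a chosen $P$, count $\ord_E$ of the resulting basis by summation-by-parts, compare asymptotics) is the same Corvaja--Zannier/Ru filtration approach the paper uses, and your re-weighting $a_i=\prod_{k\ne i}d_k$ is the same device as the paper's replacement $Q_i\mapsto Q_i^{d/d_i}$ (with $d=\mathrm{lcm}(d_i)$). The asymptotic comparison at the end is also computed correctly and gives the sharp threshold $r\ge q+2$. However, the central combinatorial step --- the existence of a basis adapted to your multi-filtration --- is not correct as stated, and this is exactly the point where the paper's proof departs from yours.

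You filter by the irreducible polynomials $\tilde F_1,\dots,\tilde F_t$ of the components of $D$ through $P$, and claim that taking complements $C_{\vec a}$ of $\sum_{\vec a'>\vec a}V_{\vec a'}$ in $V_{\vec a}$ yields a basis of $H^0(\PP^q,\OO(n\deg D'))$ with the property that each $V_{\vec a}$ is spanned by the basis vectors with label $\ge\vec a$. This fails whenever $t\ge 3$, because more than two filtrations on a vector space need not admit a common adapted basis, and here $t$ can greatly exceed $q$ since each $D_i$ may have several irreducible components through $P$. A concrete obstruction: take $q=2$, $P=[0:0:1]\in\PP^2$, and four lines $\tilde F_1=x$, $\tilde F_2=y$, $\tilde F_3=x+y$, $\tilde F_4=x-y$ through $P$ (e.g.\ $D_1=E_1+E_2$ and $D_2=E_3+E_4$, which is consistent with general position of the $D_i$). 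For every $\vec a\in\NN^4$ with $\sigma(\vec a)\le N$ one computes $\dim C_{\vec a}=1$ (since $\sum_{\vec a'>\vec a}V_{\vec a'}=\tilde F^{\vec a}\cdot(x,y)_{N-\sigma(\vec a)}$ has codimension $1$ in $V_{\vec a}$), so $\sum_{\vec a}\dim C_{\vec a}=\binom{N+4}{4}\gg\binom{N+2}{2}=\dim H^0(\OO(N))$. The complements cannot be a direct sum decomposition, so no basis of the claimed kind exists, and the inequality $\#\{j:(\vec a_j)_\beta\ge k\}\ge\dim V_{ke_\beta}$ needed for your summation by parts is unavailable.

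The paper avoids this by never looking at individual irreducible components when building the filtration. It filters instead by the at most $q$ (re-weighted) forms $\gamma_1,\dots,\gamma_q$ defining the $D_i$ that pass through $P$, orders the exponent vectors $\vec i\in\NN^q$ \emph{lexicographically} so that the filtration $W_{\vec i}$ is totally ordered, and uses the Hilbert function lemma (Lemma~\ref{hilbert_lemma}, from \cite{cz_ajm}, relying on $\gamma_1,\dots,\gamma_q$ being a regular sequence by general position) to control the quotient dimensions. With a totally ordered filtration an adapted basis always exists, and each basis element can be written as $\psi_\nu=\gamma_1^{i_1}\cdots\gamma_q^{i_q}\gamma^{(\nu)}$, which gives the bound $\ord_E(\psi_\nu)\ge i_{j_0}\ord_E(\gamma_{j_0})$ simultaneously for \emph{every} irreducible component $E$ of $D_{j_0}$, sidestepping the need for a per-component filtration. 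To repair your argument you would need to replace the $t$-fold filtration by components with the $q$-fold lexicographic filtration by the $Q_i$'s and invoke the Hilbert function lemma; as written, the proposal has a genuine gap at the basis construction.
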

In the case of nonsingular  projective varieties different from projective space, our result is the following.
\begin{theorem}\label{thm_2}
Let $q\geq 1$ and $r\geq q+2$ be integers. Let $X\subseteq \PP^\ell$ be a nonsingular projective variety of dimension $q$ defined over $K$. Let $D=\sum_{i=1}^r D_i$ be a divisor on $X$ such that each $D_i$ is defined by the restriction to $X$ of a homogeneous polynomial of degree $d_i$ in $K[X_0,\ldots,X_\ell]$. Finally, assume that the $D_i$ are in general position on $X$, i.e., the intersection of $X$ with any $q+1$ of them is empty over $\bar K$. Then $D$ is essentially large.
\end{theorem}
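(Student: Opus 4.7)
I would adapt the Corvaja--Zannier/Ru filtration method used for Theorem~\ref{thm_1} to the subvariety $X \subseteq \PP^\ell$, exploiting the flexibility afforded by the weakened notion of essentially large divisors: instead of working with all of $L(nD)$, one uses a carefully chosen subspace $V$ built from monomials in the $F_i$. Since essential largeness depends only on the support of $D$, I would first replace each $D_i$ by a suitable positive multiple of itself, reducing to the case $d_1 = \cdots = d_r = d$ with defining polynomials $F_1, \ldots, F_r \in K[X_0, \ldots, X_\ell]$ all of degree $d$.

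For a large integer $n$, take $V \subseteq L(nD)$ to be the image (under the identification (\ref{h0_isom})) of the subspace $W \subseteq H^0(X, \OO_X(nD))$ spanned by the restrictions to $X$ of monomials $F^{\mathbf{a}} = F_1^{a_1} \cdots F_r^{a_r}$ with $|\mathbf{a}| = nr$. For each $P \in D(\bar K)$, let $I_P = \{i : P \in D_i\}$; the general-position hypothesis combined with $r \geq q+2$ forces $s := |I_P| \leq q$ and, via a standard dimension-excess argument, guarantees that the $D_i$ with $i \in I_P$ meet properly at $P$, so the $F_i|_X$ ($i \in I_P$) extend to a regular system of parameters at $P$ on the smooth variety $X$. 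Setting $D_P = \sum_{i \in I_P} D_i$ and $V_j = V \cap H^0(X, \OO_X(nD - jD_P))$, a monomial $F^{\mathbf{a}}|_X$ lies in $V_j$ precisely when $a_i \geq j$ for every $i \in I_P$. Extracting a basis $B$ of $V$ compatible with the filtration $V_0 \supseteq V_1 \supseteq \cdots$ (built from the deepest nonzero $V_j$ upward), an Abel-summation argument then yields, for every component $E$ of $D$ passing through $P$,
\[ \ord_E \prod_{f \in B} f \;\geq\; \sum_{j \geq 1} \dim V_j \;-\; n \dim V. \]

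The main obstacle is verifying that this lower bound is strictly positive for $n$ sufficiently large. Unlike on $\PP^q$, the monomial restriction map $W \to V$ need not be a linear isomorphism, so a priori the filtration dimensions $\dim V_j$ could collapse unexpectedly. The smoothness of $X$ together with the regular-sequence property of the $F_i|_X$ ($i \in I_P$) enter decisively: they show that, asymptotically in $n$, both $\dim V$ and $\dim V_j$ are controlled (to leading order in $n$) by the corresponding purely combinatorial monomial counts, up to a common normalizing factor reflecting $\deg X$ and $d^q$. Once this asymptotic is established, the inequality $\sum_{j \geq 1} \dim V_j > n\dim V$ reduces to exactly the combinatorial estimate that drives Theorem~\ref{thm_1}, and is satisfied precisely because $r \geq s+2$. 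This gives that $nD$ is essentially very large and hence $D$ is essentially large.
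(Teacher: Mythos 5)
Your proposal diverges from the paper's argument in an essential way, and as written it contains a gap that prevents it from reaching the sharp bound $r\geq q+2$. The paper does \emph{not} filter $V$ by a single parameter $j$; instead it passes to $Y=\varphi(X)\subset\PP^{r-1}$ via $\varphi=[Q_1:\cdots:Q_r]$ and invokes the Hilbert weight $S_Y(N,\vec{c})$ and Chow weight $e_Y(\vec{c})$ of Mumford, Evertse--Ferretti and Ru. The point of that machinery is to optimize over \emph{all} monomial bases of $\CC[Y_0,\ldots,Y_{r-1}]_N/(I_Y)_N$ and then exploit the symmetry of the selected exponent vectors, which is what produces the factor $\tfrac1{q+1}$ and hence $r\geq q+2$.

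Your filtration $V_j=V\cap H^0(X,\OO_X(nD-jD_P))$ is the one-parameter filtration of Levin's Lemma~\ref{Levin_9_1}, restricted to a subspace $V$. Restricting to $V$ does not by itself improve the numerology. To see this, take the lower bound $\dim V_j\geq H_Y(nr-js)$ (factor out $\prod_{i\in I_P}F_i^j$; here $s=|I_P|$). Then
\[
\sum_{j\geq1}\dim V_j\ \gtrsim\ \frac{\deg Y}{q!}\cdot\frac{(nr)^{q+1}}{s(q+1)},\qquad n\dim V\ \approx\ n\cdot\frac{\deg Y}{q!}(nr)^q,
\]
so the Abel-summation inequality $\sum_{j\geq1}\dim V_j-n\dim V>0$ requires $r>s(q+1)$. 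Since $s$ can be as large as $q$, this forces $r>q^2+q$, precisely the non-sharp threshold pointed out in Remark~\ref{P_q_suff_crit_remark}. Your final assertion that the count ``reduces to exactly the combinatorial estimate that drives Theorem~\ref{thm_1}'' is therefore incorrect: the proof of Theorem~\ref{thm_1} uses a \emph{multi}-filtration $W_{\vec{i}}$ indexed by $q$-tuples $\vec{i}$, together with the identity $\Delta_{\vec{i}}=d^q$ and the symmetry $\sum_{\vec i}i_{j_0}=\tfrac1{q+1}\sum_{\hat{\vec i}}\tfrac{N}{d}$, and it is that symmetry across the $q$ parameters which brings out $\tfrac1{q+1}$. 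A single filtration by $jD_P$ cannot recover this. To fix your argument, you would need to replace $V_j$ by a multi-filtration in the exponents of $F_{i_1},\ldots,F_{i_s}$ (as in the $\PP^q$ case) and then control the dimension jumps on $X$, or, as the paper does, push everything to $Y\subset\PP^{r-1}$ and invoke the Hilbert/Chow weight lemmas, which encapsulate exactly that multi-parameter optimization.
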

While the assumptions in Theorem \ref{thm_2} are certainly restrictive, they correspond exactly to the current state-of-the-art for generalized geometric versions of the Second Main Theorem (see \cite{ru_annals}).\par
It seemed to be an interesting problem to find a broad class of projective varieties to which Theorem \ref{thm_2} can actually be applied. This is treated in Section \ref{section_alg_geom}, where we obtain the following Theorem, which is repeated as Corollary \ref{cor}.
\begin{theorem}\label{cor_intro}
Let $q\geq 1$ and $r\geq q+2$ be integers. Let $X\subseteq \PP^\ell$ be a nonsingular projective variety of dimension $q$, defined over $K$. Assume that $2q - \ell \geq 2$ holds. Let $D=\sum_{i=1}^r D_i$ be an effective divisor on $X$ defined over $K$ such that the $D_i$ are in general position. Then $D$ is essentially large.
\end{theorem}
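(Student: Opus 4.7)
The plan is to reduce directly to Theorem \ref{thm_2} by replacing each component $D_i$ with a suitable positive multiple so that every component is literally cut out by the restriction of a homogeneous polynomial, while preserving the support (and hence the essential largeness property, via Definition \ref{ess_v_l_def}) and the general position hypothesis.

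The crucial input is the Lefschetz-type theorem of Barth--Larsen for Picard groups: under the codimension hypothesis $2q - \ell \geq 2$, the restriction map $\Pic(\PP^\ell) \to \Pic(X)$ is an isomorphism, so that $\Pic(X)$ is generated by $[\OO_X(1)]$. Thus every nonzero effective component $D_i$ is linearly equivalent to $d_i H$ for a uniquely determined positive integer $d_i$, and we may write $D_i = \divisor(s_i)$ for some $s_i \in H^0(X,\OO_X(d_i))$.

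Next, by Serre vanishing applied to the ideal sheaf $\II_X \subset \OO_{\PP^\ell}$, there exists $m_0$ such that $H^1(\PP^\ell, \II_X(m)) = 0$ for all $m \geq m_0$; the long exact cohomology sequence associated with
$$0 \to \II_X(m) \to \OO_{\PP^\ell}(m) \to \OO_X(m) \to 0$$
then shows that the restriction map $H^0(\PP^\ell,\OO_{\PP^\ell}(m)) \to H^0(X,\OO_X(m))$ is surjective for all $m \geq m_0$ (and this surjectivity descends from $\bar K$ to $K$, as both spaces are finite-dimensional $K$-vector spaces). Choose $k$ with $k d_i \geq m_0$ for all $i$. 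Then $s_i^k$ cuts out $k D_i$ and is the restriction of a homogeneous polynomial $P_i \in K[X_0,\ldots,X_\ell]$ of degree $k d_i$; in particular $k D_i$ has the same support as $D_i$ and is defined by the restriction of a polynomial to $X$.

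Setting $D' := \sum_i k D_i$, we see that $D'$ has $r \geq q+2$ components in general position on $X$ (general position depends only on supports), each cut out by a homogeneous polynomial. Theorem \ref{thm_2} then gives that $D'$ is essentially large, meaning $D'$ has the same support as some essentially very large divisor $D''$. Since $D$ and $D'$ share the same support, so do $D$ and $D''$, and therefore $D$ is essentially large. The main (and essentially only) obstacle is the correct invocation of the Lefschetz-type statement $\Pic(X) = \ZZ$; the hypothesis $2q - \ell \geq 2$ is precisely the sharp classical threshold beyond which this result can fail, so the argument is tight at this step.
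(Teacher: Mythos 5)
Your proof is correct and follows the same route the paper takes: invoke the Barth--Larsen (and, in positive characteristic-free generality, Ogus) theorem to get $\Pic(\PP^\ell)\stackrel{\cong}{\to}\Pic(X)$ under the hypothesis $2q-\ell\geq 2$, so that every $D_i$ is cut out by a section of $\OO_X(d_i)$, and then reduce to Theorem~\ref{thm_2}. One point where you do more than the paper: the paper passes directly from ``$\Pic(X)=\ZZ\cdot[\OO_X(1)]$'' to ``each $D_i$ is defined by the restriction of a homogeneous polynomial,'' but this implicitly uses that the relevant section of $\OO_X(d_i)$ extends to $\PP^\ell$, which is not automatic from the Picard group statement alone. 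Your Serre-vanishing step (replacing $D_i$ by a suitable multiple $kD_i$, which has the same support and hence does not affect essential largeness or general position) cleanly supplies the needed surjectivity of $H^0(\PP^\ell,\OO_{\PP^\ell}(kd_i))\to H^0(X,\OO_X(kd_i))$. So your write-up is a slightly more careful version of the paper's terse ``Since $\Pic(\PP^\ell)=\ZZ$, this Proposition gives the following Corollary to Theorem~\ref{thm_2}'' and is logically complete.
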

In closing this Introduction, we conjecture that the bound on the number of components in Theorem \ref{cor_intro} is generally the correct one, regardless of the codimension of $X$:
\begin{conjecture}\label{conjecture}
Let $q\geq 1$ and $r\geq q+2$ be integers. Let $X\subseteq \PP^\ell$ be a nonsingular projective variety of dimension $q$, defined over $K$. Let $D=\sum_{i=1}^r D_i$ be an effective divisor, defined over $K$, on $X$ such that the $D_i$ are big and in general position. Then $D$ is essentially large.
\end{conjecture}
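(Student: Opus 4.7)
The natural strategy is to reduce the ``big plus general position'' hypothesis of the conjecture to a setting in which the filtration argument driving Theorem \ref{thm_2} can be applied directly. The starting point is Kodaira's lemma: fix a very ample divisor $H$ on $X$ (for instance the hyperplane class after a Veronese re-embedding), and for each big $D_i$ write $kD_i\sim mH+F_i$ with $F_i$ effective, where after passing to a common multiple $k$ may be taken independent of $i$. Multiplication by the canonical section $s_{F_i}$ of $\OO_X(F_i)$ then realizes the sections of $\OO_X(nkD)$ arising from products of $m$-fold hyperplane sections as a distinguished linear subspace $V\subset L(nkD)$.

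The second step is to run Levin's filtration argument on this $V$ rather than on the whole space $L(nkD)$. For each $P\in D(\bar K)$, filter $V$ by $V_j = V\cap H^0(X,\OO_X(nkD-jD_P))$ and choose a basis of $V$ compatible with the filtration, starting from the last nonzero piece, exactly as in the proof of Lemma \ref{Levin_9_1}. If one can verify the uniform inequality $\sum_{m\geq 0}(m-n)f_P^V(m,n)>0$ for all $P$ and all sufficiently large $n$, then $nkD$ is essentially very large and hence $D$ is essentially large.

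The principal obstacle lies in the dimension estimates for the filtered pieces $V_j$. In the situation of Theorem \ref{thm_2} these estimates are controlled by Hilbert-function computations adapted from \cite{ru_annals}, which depend crucially on the $D_i$ being hypersurface sections of a fixed projective embedding. For merely big $D_i$ the natural replacement is the asymptotic behaviour of the volume function $\vol(nkD-jD_P)$ together with Fujita-type approximations of big divisors by ample ones. Controlling $\vol$ along the boundary of the effective cone is delicate, and in particular ruling out anomalous drops in $\dim V_j$ when $jD_P$ becomes comparable to $nkD$ is the real technical core of the argument.

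A final subtlety is that after the Kodaira reduction the general position hypothesis need not pass automatically from the $D_i$ to the associated hypersurface sections $kD_i-F_i\sim mH$. One could try to restore it by Bertini, replacing each $kD_i-F_i$ by a generic member of $|mH|$; in the codimension regime $2q-\ell\geq 2$ handled by Corollary \ref{cor} this perturbation is benign, which is ultimately why that case is accessible. In small codimension, or on varieties where $H$ must be replaced by a less positive class, maintaining general position after the reduction seems to require genuinely new geometric input, and I expect that it is precisely at this step that a full proof of the conjecture will demand ideas beyond those currently available.
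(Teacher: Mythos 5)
What you are analyzing is Conjecture \ref{conjecture}, which the paper explicitly leaves open; there is no proof in the paper to compare your proposal against. Moreover, your text is not itself a proof: you conclude that ``maintaining general position after the reduction seems to require genuinely new geometric input'' and that ``a full proof of the conjecture will demand ideas beyond those currently available.'' That assessment is honest and, as far as I can tell, accurate, but it also means the proposal does not establish the statement.

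On the substance of the strategy, the main difficulty is more basic than the volume estimates you flag. The Kodaira-lemma reduction $kD_i\sim mH+F_i$ changes the divisors whose positivity you exploit, but the conclusion of essential very largeness is about $\ord_E\prod_{f\in B}f>0$ for the actual irreducible components $E$ of the original $D$. A subspace $V$ built from sections of multiples of $H$ twisted by the fixed sections $s_{F_i}$ has no a priori reason to contain basis elements vanishing to high order along the specific components of $D$ through a given point $P$. In the proofs of Theorems \ref{thm_1} and \ref{thm_2}, that vanishing comes from the sections being explicit polynomials in the $Q_i$'s that cut out the $D_i$'s, and the combinatorics of the filtration (Hilbert functions, Chow and Hilbert weights) ride on exactly this structure, which Kodaira's lemma destroys. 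Replacing $kD_i-F_i$ by a generic member of $|mH|$ via Bertini changes the support, hence changes which $E$ you must control, so it does not restore the needed compatibility. These obstructions are precisely why the paper proves Theorem \ref{cor_intro} only in the regime $2q-\ell\geq 2$, where the Barth--Larsen--Ogus theorems force every $D_i$ to actually be a hypersurface section, and formulates the general case as a conjecture.
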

\section{The geometric and arithmetic properties of essentially large divisors}\label{section_nevan}
\subsection{The geometric case}\label{geom_case}
We first derive the inequality of Second Main Theorem-type for essentially very large divisors. We begin by recalling some standard definitions from Nevanlinna Theory. \par
Let $g: {\CC} \rightarrow {\PP}^m$ be a  
holomorphic map. 
Let $g=[g_0, \dots, g_m]$ be a reduced representative of $g$, 
where $g_0, \dots, g_m$ are  entire functions 
on  ${\CC}$ and have no 
common zeros. The Nevanlinna-Cartan characteristic function (or the
height function) $T_g(r)$ 
is defined by
$$ T_g(r) = {1\over 2\pi}\int_0^{2\pi}\log  \max_{j=0,\ldots,m} 
|g_j(re^{\sqrt{-1}\theta})| d\theta.$$
The above definition is independent, up to an additive constant, of the choice of 
the reduced representation of $g$. \par
For a divisor $D$ on a projective variety $X$, represented by a local defining function $\rho$, and a holomorphic map
$g:\CC\to X$, 
the counting function is defined as 
$$N_g(r,D)=\int_0^ r {n_g(t,D)-n_g(0,D)\over t} dt + n_g(0,D)\log r,$$
where $n_g(t,D)$ is the number of zeros of $\rho\circ g$ inside
$|z|<t$ (counting multiplicities), and $n_g(0,D)=\lim_{t\to 0^+}n_g(t,D)$.\par
By the Jensen formula, we also have
$$N_g(r,D)=\sum_{a\in D_r\setminus \{0\}} \ord_a(\rho\circ g)\log
\left|\frac r a \right|+
\ord_0(\rho\circ g) \log r.$$\par
The following generalized version of Cartan's Second Main Theorem (see \cite{ru_gen_form_smt_tams}, \cite{vojta_smt_ajm_1997}) will be the basis of the proof of Theorem \ref{our_smt}. Note that by $\|\ \|$ we mean the (coefficient-wise) maximum norm in this Subsection.
\begin{theorem}\label{gen_smt}
Let $f=[f_0:\ldots:f_m]:\CC\to \PP^m$ be a holomorphic map whose image is not contained  in a proper linear subspace. Let $H_1,\ldots,H_q$ be arbitrary hyperplanes in $\PP^m$. Let $L_j$, $1\leq j \leq q$, be linear forms defining $H_1,\ldots,H_q$. Then, for every $\varepsilon>0$,
$$\int_0^{2\pi}\max_K \log \prod_{j\in K} \frac{\|f(r e^{\sqrt{-1}\theta})\|\|L_j\|}{|L_j(f(re^{\sqrt{-1}\theta})|}\frac{d\theta}{2\pi}.\leq. (m+1+\varepsilon)T_f(r),$$
where ``$.\leq.$'' means that the inequality holds for all $r$ outside of a set $\Gamma$ with finite Lebesgue measure, and the maximum is taken over all subsets $K$ of $\{1,\ldots,q\}$ such that the linear forms $L_j$, $j\in K$, are linearly independent.
\end{theorem}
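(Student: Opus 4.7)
The plan is to derive Theorem \ref{gen_smt} from the classical Second Main Theorem of Cartan, which bounds the sum of proximity functions for $m+1$ linearly independent hyperplanes by $(m+1+\varepsilon)T_f(r)$ outside an exceptional set of finite Lebesgue measure. Because any linearly independent subset of linear forms on $\CC^{m+1}$ has cardinality at most $m+1$, there are only finitely many admissible subsets $K \subseteq \{1,\ldots,q\}$ appearing in the maximum. This finiteness is the crucial combinatorial input that will allow a uniform control of exceptional sets later on.

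I would proceed as follows. First, with a measurable tie-breaking rule, select for each $z = re^{\sqrt{-1}\theta}$ an admissible subset $K(z)$ achieving the pointwise maximum, and partition the circle $|z|=r$ into the measurable level sets $E_K = \{z : K(z) = K\}$. The integral in the statement then splits as a finite sum $\sum_K \int_{E_K} \log \prod_{j \in K} \frac{\|f(z)\|\|L_j\|}{|L_j(f(z))|} \, \frac{d\theta}{2\pi}$. Second, on each $E_K$ the forms $\{L_j\}_{j \in K}$ can be completed to a basis $L_{i_0}, \ldots, L_{i_m}$ of the space of linear forms on $\CC^{m+1}$, and the Wronskian identity $W(L_{i_0}(f), \ldots, L_{i_m}(f)) = c \cdot W(f_0, \ldots, f_m)$, with $c \neq 0$, reduces the integrand on $E_K$ to the logarithmic proximity of $f$ to a set of linearly independent hyperplanes. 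Third, the classical Cartan estimate, which rests on the logarithmic derivative lemma applied to the nonvanishing Wronskian $W(f_0, \ldots, f_m)$ (nonvanishing because the image of $f$ is nondegenerate), bounds the sum of proximities to such a basis by $(m+1+\varepsilon')T_f(r)$ outside an exceptional set of finite measure.

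The main obstacle will be reconciling the pointwise optimization (the maximum over $K$) with the Nevanlinna-theoretic exceptional-set notation ``$.\leq.$'', since the integrand at $z$ depends on the selection $K(z)$ in a discontinuous manner. I would handle this by observing that the maximum over a finite index set is dominated by the pointwise sum $\sum_K \mathbf{1}_{E_K}(z) \cdot \log \prod_{j \in K} \frac{\|f\|\|L_j\|}{|L_j(f)|}$, so that Cartan's argument can be applied separately to each summand and the final exceptional set emerges as the union of the finitely many exceptional sets from those applications, hence still of finite Lebesgue measure. After absorbing the combinatorial factor (bounded by $\binom{q}{m+1}$) and the discrepancies between the $\varepsilon'$ of each application into a single $\varepsilon$, one recovers the claimed inequality.
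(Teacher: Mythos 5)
The paper does not prove Theorem~\ref{gen_smt}; it is quoted from \cite{ru_gen_form_smt_tams} and \cite{vojta_smt_ajm_1997}, so there is no in-paper argument to compare against. That said, your proposal contains a genuine gap at the final step. Splitting the circle into the level sets $E_K$ is valid and gives $\int_0^{2\pi}\max_K(\cdots)\,\frac{d\theta}{2\pi}=\sum_K\int_{E_K}(\cdots)\,\frac{d\theta}{2\pi}$. But the way you propose to control each summand --- invoke the finished Cartan estimate $\sum_{j\in K^{+}}m_f(r,H_j)\leq(m+1+\varepsilon')T_f(r)$ for a completed basis $K^{+}\supseteq K$ --- controls only the \emph{full-circle} integral, not $\int_{E_K}$. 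Since the integrand $\log\prod_{j\in K}\frac{\|f\|\,\|L_j\|}{|L_j(f)|}$ is nonnegative, the best you can do is $\int_{E_K}\leq\int_0^{2\pi}\leq(m+1+\varepsilon')T_f(r)$, and summing over the at most $\binom{q}{m+1}$ admissible $K$ yields a right-hand side of $\binom{q}{m+1}(m+1+\varepsilon')T_f(r)$. That binomial coefficient multiplies the leading term $T_f(r)$; it is not an additive discrepancy and cannot be ``absorbed into $\varepsilon$'' as your final paragraph asserts.

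The fix --- and this is essentially the Vojta--Ru proof --- is to carry out the Wronskian reduction \emph{pointwise inside the max} rather than feed each level set through the finished Cartan bound. For a completed basis $K^{+}=\{i_0,\ldots,i_m\}$ one has $W\bigl(L_{i_0}(f),\ldots,L_{i_m}(f)\bigr)=c_K\,W(f_0,\ldots,f_m)$ with $c_K\neq 0$, whence
\[
\log\prod_{j\in K^+}\frac{\|f\|}{|L_j(f)|}
=(m+1)\log\|f\|-\log\bigl|W(f_0,\ldots,f_m)\bigr|
+\log\frac{\bigl|W(L_{i_0}(f),\ldots,L_{i_m}(f))\bigr|}{\prod_{j\in K^+}|L_j(f)|}-\log|c_K|.
\]
The first two terms are independent of $K$, so upon integration they contribute $(m+1)T_f(r)-N\bigl(r,1/W\bigr)+O(1)$ exactly once; the last variable term is the logarithm of a sum of products of logarithmic derivatives, so $\max_K$ can be replaced by $\sum_K$ at the cost of an $O(1)$ inside the logarithm, and the Logarithmic Derivative Lemma then bounds its integral by $o(T_f(r))$ outside a set of finite measure. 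The combinatorial factor $\binom{q}{m+1}$ thus only ever appears inside a logarithm, where it is harmless. You identified both essential ingredients (the Wronskian identity and the LDL), but the assembly as written forfeits the sharp constant $m+1$.
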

To prove our result, we need a general formula for the height $T_g(r)$
when  $g=[g_0:\ldots:g_m]$, where $g_0, \dots, g_m$ are meromorphic
functions (i.e., it is not necessarily the {\it reduced} representation
of $g$). According to \cite[p. 202]{lang_book}, such a formula reads
\begin{align*}T_g(r)=&\int_0^{2\pi} \log \max_{j=0,\ldots,m}|g_j(re^{\sqrt{-1}\theta})|\frac{d\theta}{2\pi}-\log\max_{j\in A} |c_{g_j}|\\
&+\sum_{a\in D_r\setminus{0}}\max_{j=0,\ldots,m}(-\ord_a(g_j))\log\left|\frac r a \right|+\max_{j=0,\ldots,m}(-\ord_0(g_j))\log |r|,
\end{align*}
where $\log\max_{j\in A} |c_{g_j}|$ is a correction term that makes
the definition independent of multiplication by a nowhere zero
holomorphic function (see \cite{lang_book} for full details). \par
Our main result in this Subsection is the following inequality of Second Main Theorem-type. According to the subsequent Corollary \ref{hyp_cor}, it can be understood from the geometric point of view as a quantitative version of the Chern hyperbolicity of the complement of an essentially large divisor.
\begin{theorem}\label{our_smt}
Let $D$ be an essentially very large divisor on a nonsingular complex projective variety $X$. Let $V$ be a linear subspace of $L(D)$ as in Definition \ref{ess_v_l_def}. Let $\phi_0,\ldots\phi_m$ be an arbitrary basis for $V$. Let $f:\CC\to X$ be an algebraically nondegenerate holomorphic map. Let $\Phi$ be the rational map $[\phi_0:\ldots:\phi_m]:X\to \PP^m$, and write
$$F=\Phi\circ f: \CC \to \PP^m.$$
When $E_1,\ldots, E_\ell$ denote the irreducible components of $D$, let
$$M=\max\{-\ord_{E_i}(\phi_j)\ |i=1,\ldots,\ell,\ j=0,\ldots,m\}.$$
Then, for every $\epsilon>0$,
$$T_F(r) .\leq. (M(m+1)+1+\epsilon) N_f(r,D),$$
where ``$.\leq.$'' means that the inequality holds for all $r$ outside of a set $\Gamma$ with finite Lebesgue measure.
\end{theorem}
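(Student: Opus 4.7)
The plan is to invoke the generalized Cartan Second Main Theorem (Theorem~\ref{gen_smt}) for the map $F = \Phi \circ f : \CC \to \PP^m$ with a finite collection of hyperplanes extracted from the essentially very large structure of $D$. Since $\phi_0, \ldots, \phi_m$ form a basis of $V \subseteq L(D)$ (hence are linearly independent rational functions), the algebraic nondegeneracy of $f$ in $X$ forces $F$ to be linearly nondegenerate in $\PP^m$, so Theorem~\ref{gen_smt} applies. I build the hyperplanes as follows: call a nonempty $T \subseteq \{E_1,\ldots,E_\ell\}$ a \emph{type} if $\bigcap_{E \in T} E \neq \emptyset$; there are only finitely many types. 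For each such $T$, pick $P_T \in \bigcap_{E \in T} E$ and invoke Definition~\ref{ess_v_l_def} to obtain a basis $B_T = \{\psi_{T,0},\ldots,\psi_{T,m}\}$ of $V$ with $\Psi_T := \prod_i \psi_{T,i}$ satisfying $\ord_E \Psi_T \geq 1$ for every $E \in T$. Writing each $\psi_{T,i} = \sum_k a_{T,i,k}\phi_k$ produces a linear form $L_{T,i}$ on $\PP^m$ with $L_{T,i}(F(z)) = \psi_{T,i}(f(z))$, and I apply Theorem~\ref{gen_smt} to the resulting finite collection of hyperplanes.

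Pointwise, the maximum over $K$ in Cartan's inequality dominates the value at $K = B_{T(z)}$, where $T(z)$ is the set of components of $D$ containing $f(z)$ (and I use any fixed basis when $f(z) \notin D$). This particular contribution collapses to $(m+1)\log\|F(z)\| - \log|\Psi_{T(z)}(f(z))| + O(1)$. Combining with Lang's formula (displayed just before the theorem), which expresses $\int_0^{2\pi} \log\|F(re^{\sqrt{-1}\theta})\|\,d\theta/(2\pi)$ as $T_F(r)$ minus a pole-counting term for the meromorphic representation $[\phi_0(f):\cdots:\phi_m(f)]$ that is bounded by $M\cdot N_f(r,D)$ (since each $\phi_j$ has pole of order at most $M$ along every $E_i$), I obtain, outside a set of finite Lebesgue measure in $r$,
\[ \int_0^{2\pi} \max_T \bigl[-\log|\Psi_T(f(re^{\sqrt{-1}\theta}))|\bigr]\,\frac{d\theta}{2\pi} \;\leq\; \epsilon\, T_F(r) + (m+1)M\, N_f(r,D) + O(1). \]

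To close the argument, I lower-bound the left-hand side by $m_f(r,D) + O(1)$, where $m_f(r,D)$ is the proximity function of $f$ to $D$: the positive vanishing $\ord_E \Psi_T \geq 1$ for $E \in T$, combined with the standard Weil-function expansion $-\log|\Psi_T(x)| = \sum_i \ord_{E_i}(\Psi_T)\,\lambda_{E_i}(x) + O(1)$ and the choice $T = T(x)$ (which makes the contribution from components $E \notin T(x)$ uniformly bounded since $x \notin E$), yields the pointwise estimate $\max_T[-\log|\Psi_T(x)|] \geq \lambda_D(x) + O(1)$. Substituting gives $m_f(r,D) \leq \epsilon T_F(r) + (m+1)M\, N_f(r,D) + O(1)$, and combining with the elementary estimate $T_F(r) \leq m_f(r,D) + N_f(r,D) + O(1)$ (which follows since $F$ is cut out by sections of $\OO_X(D)$) and absorbing $\epsilon$ into the error yields the desired inequality $T_F(r) \leq (M(m+1)+1+\epsilon)\,N_f(r,D)$. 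The main technical difficulty lies precisely in the last pointwise bound: because individual $\psi_{T,i}$ may carry large negative orders on components $E \notin T$, the freedom to choose $T$ depending on $x$ is essential, and the analysis must be carried out carefully near intersections of components of $D$.
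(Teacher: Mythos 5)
Your overall strategy matches the paper's: apply the generalized Cartan Second Main Theorem (Theorem~\ref{gen_smt}) to a finite collection of linear forms $L_{T,i}$ obtained from the bases furnished by essential very largeness, and feed the result into Lang's formula for the non-reduced representation $[\phi_0(f):\cdots:\phi_m(f)]$. The selection of a finite set of types/bases, the pole-term bookkeeping via $\phi_j = s_j/s_D$, and the structure of the final rearrangement are all in the spirit of the paper's argument. However, the central analytic estimate you propose is false in general, and this is a genuine gap.

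The problem is the claimed pointwise bound $\max_T\bigl[-\log|\Psi_T(x)|\bigr] \ge \lambda_D(x) + O(1)$. Near a component $E$ with $\ord_E D = n_E$, the Weil function $\lambda_D$ grows like $-n_E\log|\rho_E|$, whereas essential very largeness only guarantees $\ord_E \Psi_T \ge 1$, so $-\log|\Psi_T|$ grows like $-\ord_E(\Psi_T)\log|\rho_E|$, which can be far smaller than $\lambda_D$ whenever $n_E \ge 2$. (There is also a secondary issue: $\Psi_T$ may have zeros off the support of $D$, which the ``Weil-function expansion'' you write as an equality over the $E_i$ ignores, and those zeros push $-\log|\Psi_T|$ further down.) The paper's version of this estimate is weaker and correct: it is the compactness bound \eqref{ess_v_l_boundedness}, which after taking logarithms reads $\max_T\bigl[-\log|\Psi_T(x)|\bigr] \ge \tfrac{1}{M}\log\max_j|\phi_j(x)| - O(1)$, i.e.\ it compares $\max_T(-\log|\Psi_T|)$ not to $\lambda_D$ but to $\tfrac1M\log\max_j|\phi_j|$. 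The factor $\tfrac1M$ is exactly what produces the coefficient $M(m+1)+1$ in the theorem. Using this corrected estimate in place of your Step~(c), and combining it directly with Lang's formula and the pole-term bound $\le N_f(r,D)$ (which the paper derives via $\phi_j = s_j/s_D$ and which is tighter than your $\le M\,N_f(r,D)$), yields the stated inequality; you do not need the extra First-Main-Theorem-type step $T_F \le m_f(r,D) + N_f(r,D) + O(1)$ at all. As written, your proof lands on the right constant only because the over-strong Step~(c) and the under-strong pole bound happen to offset; with the correct pole bound $\le N_f(r,D)$, your Step~(c) would give the impossibly better constant $m+2$, which is a signal that Step~(c) cannot hold.
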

Note that since $\Phi$ is nonconstant and $f$ is algebraically nondegenerate, the map $F$ also is nonconstant.\par

\begin{corollary} \label{hyp_cor} Let $D$ be an essentially large divisor on a
  nonsingular complex projective variety $X$. Then every holomorphic
  map $f: \CC \to X \setminus D$ must be algebraically degenerate,
  i.e., the image of $f$ must be contained in a proper subvariety of
  $X$. In other words, $X \setminus D$ is Chern hyperbolic (aka quasi Brody hyperbolic).
\end{corollary}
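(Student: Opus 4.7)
The plan is to proceed by contradiction. Suppose $f\colon\CC\to X\setminus D$ is a holomorphic map whose image is \emph{not} contained in a proper subvariety of $X$, i.e.\ $f$ is algebraically nondegenerate. The strategy is to apply Theorem \ref{our_smt} to $f$ and derive a contradiction from the fact that $f$ completely avoids $D$, which will force the Nevanlinna-Cartan characteristic $T_F(r)$ of a nonconstant holomorphic map to be bounded.

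First, I would unwind the definition of essentially large: since $D$ is essentially large, there exists an essentially very large divisor $D'$ with $\mathrm{supp}(D')=\mathrm{supp}(D)$. Let $V\subseteq L(D')$ be a linear subspace as in Definition \ref{ess_v_l_def} and pick any basis $\phi_0,\ldots,\phi_m$ of $V$. Because $\mathrm{supp}(D')=\mathrm{supp}(D)$ and $f(\CC)\subseteq X\setminus D$, the map $f$ also avoids the support of $D'$, so the counting function satisfies $N_f(r,D')\equiv 0$. Now form the rational map $\Phi=[\phi_0\colon\cdots\colon\phi_m]\colon X\dra\PP^m$ and let $F=\Phi\circ f\colon\CC\to\PP^m$; as noted in the paragraph following Theorem \ref{our_smt}, since $\Phi$ is nonconstant and $f$ is algebraically nondegenerate, $F$ is itself nonconstant.

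Applying Theorem \ref{our_smt} to $D'$ and to this $f$ yields, for any $\epsilon>0$,
\[
T_F(r)\,.\leq.\,(M(m+1)+1+\epsilon)\,N_f(r,D')=0,
\]
outside a set of finite Lebesgue measure. On the other hand, $F$ is a nonconstant holomorphic map to $\PP^m$, so its Nevanlinna-Cartan characteristic satisfies $T_F(r)\to\infty$ as $r\to\infty$. This contradicts the inequality above (which would force $T_F(r)$ to stay bounded for all sufficiently large $r$ outside a set of finite measure), completing the proof.

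The main obstacle, if any, is essentially bookkeeping rather than conceptual: one has to be careful that the passage from ``essentially large'' to ``essentially very large'' is legitimate (which is precisely the definition, since the two notions have the same support) and that Theorem \ref{our_smt} is invoked with a valid choice of $V$, basis, and the integer $M$, all of which are finite and well-defined simply because $\phi_0,\ldots,\phi_m\in L(D')$ have poles only along irreducible components of $D'$. Once these routine checks are in place, the contradiction is immediate from the growth of $T_F$ versus the vanishing of $N_f(r,D')$.
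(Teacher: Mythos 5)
Your argument is correct and is essentially identical to the paper's own proof: reduce to the essentially very large case since only the support matters, observe that $N_f(r,D)=0$ because $f$ omits $D$, invoke Theorem \ref{our_smt} to force $T_F(r)$ to be bounded, and contradict the nonconstancy of $F$. The only cosmetic difference is that you spell out the replacement divisor $D'$ explicitly where the paper simply says ``we can assume $D$ is essentially very large.''
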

\begin{proof}[Proof of Corollary \ref{hyp_cor}] Since the statement of the Corollary only refers to the support of $D$, we can assume that $D$ is essentially {\it very} large. Assume that 
$f: \CC \to X\setminus D$ is algebraically nondegenerate.  Since $f$ omits $D$,  $N_f(r,D)=0$. So the above inequality implies that $T_F(r)$ is bounded outside of a set of finite Lebesgue measure. However, since $F$ is nonconstant, this is false and gives a contradiction.
\end{proof}
\begin{proof}[Proof of Theorem \ref{our_smt}]
It is easy to see (\cite[Remark 8.2]{levin_annals}) that there exists a finite set $J$ of elements in $V$ such that for every $P\in D$ there exists a subset $I\subset J$ that is a basis of $V$ with $\ord_{E}\prod_{f\in I} f>0$ when $E$ is a component of $D$ with $P\in E$. Let $J'$ be the set of linear forms $L$ in $m+1$ variables such that $L\circ\Phi\in J$.\par
According to Theorem \ref{gen_smt},
$$\int_0^{2\pi}\max_I \log \prod_{L\in I} \frac{\|F(r e^{\sqrt{-1}\theta})\|\|L\|}{|L(F(re^{\sqrt{-1}\theta}))|}\frac{d\theta}{2\pi}.\leq. (m+1+\varepsilon)T_F(r),$$
where the maximum is taken over subsets $I\subset J'$ such that $I$ consists of exactly $m+1$ independent linear forms.\par
Since the left-hand side is independent of the choice of the representation of $F$, we can rewrite the above inequality as 
$$\int_0^{2\pi}\max_I \log \prod_{L\in I} \frac{\max_{0 \leq j \leq
    m}|\phi_j(f(re^{\sqrt{-1}\theta}))|}
{|L(F(re^{\sqrt{-1}\theta}))|}\frac{d\theta}{2\pi}.\leq.
(m+1+\varepsilon)T_F(r) +O(1),$$
where $O(1)$ is introduced by dropping the factor $\|L\|$ after assuming that a fixed choice of coefficients has been made.\par
Next, we observe that since $D$ is essentially very large, outside of the support of the divisor $D$, we have
\begin{equation}\label{ess_v_l_boundedness}
\max_j |\phi_j(f(re^{\sqrt{-1}\theta}))|^{\frac 1 M} \min_I \prod_{L\in I} {|L(F(re^{\sqrt{-1}\theta}))|} \leq  C,
\end{equation}
where $C$ is some constant. The reason is that we can cover $X$ by a finite number of open sets on each of which $|\phi_j\min_I (\prod_{L\in I} {|L\circ \Phi |)^M}|$ is bounded for each $j=0,\ldots,m$. Namely, for $P\in X$, define $U_P$ as follows. If $P\not \in D$, take a small neighborhood $U_P$ of $P$ such that its closure is disjoint from the support of $D$. Since the $\phi_j$ have no poles outside of the support of $D$, the boundedness on $U_P$ is obvious. If $P\in D$, take $U_P$ to be a small neighborhood of $P$ with the property that $Q\in E\cap U_P$ implies $P\in E$ for all components $E$ of $D$. Consequently, by the essential very largeness, there exists $I_0$ such that $ \ord_E (\prod_{L\in I_0} {L\circ \Phi )}$ is a positive integer and $\ord_E\left(\phi_j\min_I (\prod_{L\in I} {|L\circ \Phi |)^M}\right)\geq 0$ for all $j=0,\ldots,m$. Perhaps after shrinking $U_P$, this implies that $|\phi_j\min_I (\prod_{L\in I} {|L\circ \Phi |)^M|}$ is bounded on $U_P$ for every $j=0,\ldots,m$. Finally, since $X$ is compact, we can cover it with a finite number of open sets of the form $U_P$.\par
Applying $\log$ to both sides of \eqref{ess_v_l_boundedness} yields
$$\log \max_j |\phi_j(f(re^{\sqrt{-1}\theta}))|^{\frac 1 M}
+  \log \min_I \prod_{L\in I} {|L(F(re^{\sqrt{-1}\theta}))|} \leq  \log C.$$
Therefore, 
$$\log \max_j
|\phi_j(f(re^{\sqrt{-1}\theta}))|^{m+1+\frac 1 M}
\leq \max_I \log \prod_{L\in I}\frac{\max_j |\phi_j(f(re^{\sqrt{-1}\theta}))|} 
{{|L(F(re^{\sqrt{-1}\theta}))|}} +\log C.$$
Thus
 \begin{gather*}
\int_0^{2\pi} \log \max_j |\phi_j(f(re^{\sqrt{-1}\theta}))| d\theta 
 \\ = \frac {M}{M(m+1)+1} 
\int_0^{2\pi}\log  \max_j |\phi_j(f(re^{\sqrt{-1}\theta}))|^{m+1+\frac{1}{M}} d\theta \\
\leq  \frac {M}{M(m+1)+1}  \left(  \int_0^{2\pi}
\max_I \log {\prod_{L\in I} \frac{\max_j |\phi_j(f(re^{\sqrt{-1}\theta}))|}  {|L(F(re^{\sqrt{-1}\theta}))|}}  d\theta +\log C \right) \\
.\leq.   \frac {M(m+1+\epsilon)}{M(m+1)+1} 
T_F(r)  +O(1).
\end{gather*}
Recall that from Lang's formula \cite[p. 202]{lang_book},
\begin{align*}
T_F(r)=&\int_0^{2\pi} \log
\max_j|\phi_j(f)(re^{\sqrt{-1}\theta})|\frac{d\theta}{2\pi}-
\log\max_{j\in K} |c_{\phi_j(f)}|\\
&+\sum_{a\in D_r\setminus{0}}\max_{j=0,\ldots,m}(-\ord_a(\phi_j(f)))\log\left|\frac r a \right|\\ &+\max_{j=0,\ldots,m}(-\ord_0(\phi_j(f)))\log |r|.
\end{align*}
Thus, 
\begin{align*}
T_F(r)  .\leq.&  \frac{M(m+1+\epsilon)}{M(m+1)+1} T_F(r)
+ \sum_{a\in
  D_r\setminus{0}}\max_{j=0,\ldots,m}(-\ord_a(\phi_j(f)))\log\left|\frac
  r a \right|\\&+\max_{j=0,\ldots,m}(-\ord_0(\phi_j(f)))\log |r|  +O(1).
\end{align*}
Recall that due the isomorphism \eqref{h0_isom}, there are sections $s_0,\ldots , s_m\in H^0(X,\OO_X(D))$ such that $\phi_j=\frac{s_j}{s_D}$. Therefore, for all $a\in D_r$,
\begin{align*}
\max_{j} (-\ord_a(F_j))&=\max_{j}(-\ord_a(\phi_j\circ f))\\
&=\max_{j}\left(-\ord_a\left(\frac{s_j}{s_D}\circ
    f\right)\right)\\&=\ord_a(s_D\circ
f)-\min_{j}\ord_a(s_j\circ f) \\&\leq \ord_a(s_D\circ f).
\end{align*}
Consequently, we obtain
\begin{align*}  \frac {1-\epsilon}{M(m+1)+1} T_F(r)
.\leq. &\sum_{a\in D_r\setminus\{0\}}\ord_a(s_D\circ f)\log\left|\frac r a \right|\\ &+\ord_0(s_D\circ f)\log r+O(1)\\
=&N_f(r,D) +O(1).
\end{align*}\par
Since $T_F(r)$ is unbounded for $r\to \infty$ outside of a set of finite Lebesgue measure, we can, after perhaps enlarging $\Gamma$, continue to estimate as follows, where $\delta$ is any positive number.
\begin{align*}  &N_f(r,D) +O(1)\\
.\leq.& N_f(r,D)+\delta T_F(r).
\end{align*}
We can now subtract $\delta T_F(r)$ from both sides of the inequality and then divide both sides by $\frac {1-\epsilon}{M(m+1)+1}-\delta$. Choosing small enough $\epsilon$ and $\delta$ (depending on $\epsilon$) concludes the proof.
\end{proof}
\subsection{The arithmetic case}\label{arithm_case}
We now derive an inequality of Schmidt Subspace Theorem-type for essentially large divisors. We again begin by recalling some standard definitions.\par
Let $k$ be a number field of degree $d$. Denote by $M_k$ the set
of places (equivalence classes of absolute values) of $k$ and
write $M^{\infty}_k$ for the set of 
archimedean places of $k$.  In an archimedean class $\upsilon$, we choose the absolute value $|~~|_{\upsilon}$ such that 
$|~~|_{\upsilon}
= |~~|$ on $\QQ$ (the standard absolute value). For a nonarchimedean class $\upsilon\in M_k\setminus M^{\infty}_k$, we let $|p|_{\upsilon} = p^{-1}$ if $\upsilon$ lies above the 
rational 
prime $p$. \par
Denote by $k_{\upsilon}$ 
the completion of $k$ with respect to $\upsilon$ and by 
$d_{\upsilon} = [k_{\upsilon}
: {\QQ}_{\upsilon}]$ the local degree. Let $||~~||_{\upsilon} = 
|~~|_{\upsilon}^{d_{\upsilon}/d}.$ 
We have the product formula: for every $x\in k^*$, 
$$\prod_{\upsilon\in M_k} \|x\|_{\upsilon}=1.$$
 For $Q=[x_0, \dots, x_m] \in \PP_k^{m}$, let 
 $\|Q\|_{\upsilon} = 
\max _{1 \leq i \leq m}\|x_i\|_{\upsilon}$. Moreover, define the  logarithmic  height of
$Q$ by 
$$h(Q) =\sum_{\upsilon \in {M_k}} \log \|{Q}\|_{\upsilon}.$$
By the product formula, its definition is independent of the choice of the 
representations. \par
We now come to the notion of a Weil function. Let $X$ be a projective nonsingular variety and $D$ be an effective divisor on $X$, both defined over 
a number field $k$. Extend $\|~~\|_{\upsilon}$ to an 
absolute value on the algebraic closure $\bar{k}_{\upsilon}$ for all $\upsilon\in M_k$. Then a 
{\it (local) Weil function for $D$ relative to $\upsilon$} is a function
$\lambda_{D, \upsilon}: X(\bar{k}_{\upsilon})\setminus  \mbox{Supp}(D)\rightarrow {\RR}$ 
such that if $D$ is represented locally by $(f)$
on an open set $U$, then 
$$\lambda_{D, \upsilon}(P): =-\log\|f(P)\|_{\upsilon}+\alpha(P),$$
where $\alpha(P)$ is a continuous function on $U({\bar k}_{\upsilon}).$
We sometimes think of $\lambda_{D, \upsilon}$ as a function of 
$X(k)\setminus \mbox{Supp}(D)$ or $X(\bar{k})\setminus \mbox{Supp}(D)$ by simply choosing an embedding 
${\bar k}\rightarrow \bar{k}_{\upsilon}.$\par
Given a finite set $S \subset M_k$ containing all archimedean places, define the {\it proximity function} 
$m_S(x, D)$ by, for $x \in X({\bar k})\setminus \mbox{Supp}(D) $,
\begin{equation}m_S(x, D) = {1\over [k:{\QQ}]} \sum_{\upsilon \in S}  \lambda_{D, \upsilon}(x).
\end{equation}
The {\it counting function} $N_S(x, D)$ is defined by 
\begin{equation}N_S(x, D) = {1\over [k:{\QQ}]} \sum_{\upsilon \not\in S}   \lambda_{D, \upsilon}(x).
\end{equation}
Note that the sum above is still a finite sum, since the terms all vanish except for finitely 
many.  

\begin{definition} \label{int_pt_def} A subset $\Sigma$ of $X({\bar k})\setminus \mbox{Supp}(D)$ is 
$(D, S)$-{\it integral} if there is a Weil function 
$\lambda_D$ for $D$ and constants $c_{\upsilon}\in {\RR}$ for all 
$\upsilon\not\in S$ such that $c_{\upsilon}=0$ for almost all $c_{\upsilon}=0$
 and $\lambda_{D, \upsilon}(P)\leq c_{\upsilon}$ for all  
$\upsilon\not\in S$ and $P\in \Sigma$. In other words, a subset $\Sigma$ of $X({\bar k})\setminus \mbox{Supp}(D)$ is 
$(D, S)$-{\it integral} if and only if $N_S(x, D)$ is bounded over $\Sigma$.
\end{definition}\par

We recall the following generalized version of Schmidt's Subspace Theorem from \cite{vojta_ajm_87}.
\begin{theorem}  \label{schmidt} Let $k$ be a number field 
with its set of canonical places $M_k$. Let $S \subset M_k$ be a finite set containing 
all archimedean places.  Let $H_1, \dots, H_m$ be hyperplanes in 
$\PP^n$ defined over ${\bar k}$ with corresponding 
Weil functions $\lambda_{H_1}, \dots, \lambda_{H_m}$. 
Then there exists a finite union of hyperplanes $Z$, depending only
on $H_1, \dots, H_m$ (and not $k, S$), such that for 
any $\epsilon>0$, 
$$\sum_{\upsilon \in S} \max_I \sum_{i\in I} 
\lambda_{H_i, \upsilon}(P) \leq (n+1+\epsilon)h(P)$$
holds for all but finitely many $P\in {\PP}^n\setminus Z$, where the maximum is taken over subsets $I \subset \{1, \dots, m\}$ such that the linear forms defining $H_i, i\in I,$ are linearly independent.
\end{theorem}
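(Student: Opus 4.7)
The plan is to deduce Vojta's max-form of the Schmidt Subspace Theorem from the classical Schmidt--Schlickewei bound for a fixed system of $n+1$ linearly independent hyperplanes, via a finite-partition argument over per-place choices of maximizing subsystems.

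First I would observe that for each place $\upsilon\in S$ and each admissible point $P\in\PP^{n}(\bar k)\setminus\bigcup_{i}\text{Supp}(H_{i})$, the quantity $\max_{I}\sum_{i\in I}\lambda_{H_{i},\upsilon}(P)$ is attained by some subset $I(\upsilon,P)\subseteq\{1,\ldots,m\}$ whose linear forms are linearly independent; any such subset has cardinality at most $n+1$. Hence the number of candidate subsets is finite, and the tuple $\bigl(I(\upsilon,P)\bigr)_{\upsilon\in S}$ ranges over a finite index set $\mathcal{T}$ as $P$ varies. This partitions the admissible points of $\PP^{n}(\bar k)$ into $|\mathcal{T}|$ classes.

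Next, I would fix one tuple $(I_{\upsilon})_{\upsilon\in S}\in\mathcal{T}$. After extending each $I_{\upsilon}$, if necessary, to an ordered $(n+1)$-set of linearly independent forms drawn from $H_{1},\ldots,H_{m}$---or, failing that, by adjoining finitely many auxiliary generic hyperplanes whose Weil functions contribute only bounded errors after absorption into $\epsilon h(P)$---the claimed inequality on the corresponding class reduces to the classical Schmidt Subspace Theorem over number fields applied to the chosen family of $n+1$ independent forms at each place. That input yields the bound $(n+1+\epsilon)h(P)$ outside a finite union $Z_{(I_{\upsilon})}$ of proper linear subspaces of $\PP^{n}$ depending only on the forms involved. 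Setting $Z$ to be the union, taken over $(I_{\upsilon})\in\mathcal{T}$, of these exceptional subspaces then produces a finite union of proper linear subspaces depending only on $H_{1},\ldots,H_{m}$, independently of $k$ and $S$, as required.

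The hard part by far is the classical Schmidt Subspace Theorem itself, whose proof in the number-field setting rests on the geometry of numbers (Minkowski's second theorem on successive minima together with Davenport's lemma) combined with Schmidt's intricate combinatorial and arithmetic machinery for simultaneous Diophantine approximation---or, in more recent treatments, Faltings' product theorem. The partitioning argument sketched above merely repackages this deep input into the clean max formulation stated in Theorem \ref{schmidt}; the only technical subtlety to watch is that the extension step of each $I_{\upsilon}$ to an $(n+1)$-tuple be carried out so that the resulting exceptional subspaces depend only on the original $H_{i}$ and not on $k$ or $S$.
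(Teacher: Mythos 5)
The paper does not prove Theorem~\ref{schmidt} at all: it is quoted verbatim as a known result, with an explicit attribution to Vojta's paper \emph{A refinement of Schmidt's subspace theorem} (\cite{vojta_ajm_87}). So there is no ``paper's own proof'' to compare against, and any genuine argument you write is automatically going further than the source text.

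That said, your sketch has a real gap. The finite-partition reduction (choosing a maximizing independent subset $I(\upsilon,P)$ at each place, partitioning points by the resulting tuple, and extending each $I_\upsilon$ to a full independent $(n+1)$-system, absorbing the lower bounds on the dropped or auxiliary Weil functions into $\epsilon h(P)$) is correct and standard; it is exactly how one passes from the per-place product form of the Subspace Theorem to the $\max$ form. But you then claim that the classical Schmidt--Schlickewei theorem yields a bound ``outside a finite union $Z_{(I_\upsilon)}$ of proper linear subspaces \ldots depending only on the forms involved,'' and you demote to a ``technical subtlety'' the requirement that $Z$ not depend on $k$, $S$, or $\epsilon$. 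This is not a subtlety to be watched; it is the entire content of Vojta's refinement. The classical theorem only asserts that the exceptional set is contained in finitely many proper linear subspaces, and those subspaces may a priori depend on $k$, $S$, $\epsilon$, and the implied constants; it also does not give ``all but finitely many points'' outside $Z$, only containment in the exceptional subspaces. Producing a single finite union of hyperplanes $Z$, depending only on $H_1,\ldots,H_m$, outside of which the inequality fails for only finitely many $P$, requires Vojta's additional argument (analyzing the successive-minima/Davenport structure and isolating the genuinely ``bad'' subspaces attached to the linear system itself). Your proposal does not supply that step, so as written it re-derives only the weaker, non-uniform Schlickewei-type statement, not the one the paper actually invokes. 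The honest move here is either to cite Vojta as the paper does, or to reproduce the refinement argument explicitly rather than folding it into the extension step.
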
\par
Our main result in this Subsection is the following inequality of Schmidt Subspace Theorem-type for essentially large divisors. 
\begin{theorem} \label{our_schmidt} Let $D$ be an essentially very large divisor on a nonsingular  projective variety $X$, both defined over a number field $k$.  Let $V$ be a linear subspace of $L(D)$ as in Definition \ref{ess_v_l_def}. Let $\phi_0,\ldots\phi_m$ be an arbitrary basis for $V$ and let $\Phi$ be the rational map $[\phi_0:\ldots:\phi_m]:X\to \PP^m$. Let $S\subset M_k$ be a finite set of places containing all archimedean ones. Then for every $\epsilon >0$,
$$h(\Phi(P)).\leq.  (M(m+1)+1+\epsilon)N_S(P, D),$$
where ``$.\leq.$'' means that the inequality holds for all $P\in X(k)$ outside a Zariski closed
variety $Z$ of $X$.
\end{theorem}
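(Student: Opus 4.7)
The plan is to mimic the proof of Theorem \ref{our_smt} in the arithmetic setting, replacing Cartan's generalized Second Main Theorem (Theorem \ref{gen_smt}) by Schmidt's Subspace Theorem (Theorem \ref{schmidt}) and re-interpreting the compactness-based local boundedness of the Nevanlinna argument in the language of Weil functions.

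First, invoking the consequence of Levin's Remark 8.2 already used in the proof of Theorem \ref{our_smt}, I would extract from $V$ a finite set $J \subset V$ such that for every $P \in D(\bar k)$ some $(m+1)$-element subset $I_P \subset J$ is a basis of $V$ satisfying $\ord_E \prod_{f \in I_P} f > 0$ for every irreducible component $E$ of $D$ containing $P$. Via $\Phi$ this corresponds to a finite collection $J'$ of linear forms on $\PP^m$, to which Theorem \ref{schmidt} applies. Pulling the exceptional union of hyperplanes in $\PP^m$ back to $X$, and enlarging it by the base locus of $V$ together with the indeterminacy locus of $\Phi$, yields a proper Zariski-closed subvariety $Z \subset X$ such that
$$\sum_{v \in S} \max_I \sum_{L \in I} \lambda_{L,v}(\Phi(P)) \leq (m+1+\epsilon)\, h(\Phi(P)) + O(1)$$
holds for all $P \in X(k) \setminus Z$, the maximum running over $(m+1)$-element subsets $I \subset J'$ of linearly independent forms.

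Second, I would transcribe the compactness step of the proof of Theorem \ref{our_smt} into the arithmetic language of Weil functions. The choice of $I_P$ together with the definition of $M$ ensures that each rational function $\phi_j \bigl(\prod_{L \in I_P} L \circ \Phi\bigr)^M$ is regular on a neighborhood $U_P$ of $P$ in $X$; by quasi-compactness finitely many such $U_P$ cover $X$, and on each of them these regular functions are $M_k$-bounded, once integral models for $X$, $\OO_X(D)$ and the $s_j$ have been fixed. Translating into Weil-function language, this yields, at every place $v$, the local inequality
$$\max_I \sum_{L \in I} \lambda_{L,v}(\Phi(P)) \;\geq\; \tfrac{M(m+1)+1}{M}\, \log \max_j \|\phi_j(P)\|_v \;-\; \gamma_v(P),$$
where $\gamma_v$ is an $M_k$-bounded correction. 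Summing this estimate over $v \in S$ and comparing with Schmidt's bound from the first step, while using both $\log \max_j \|\phi_j(P)\|_v = \lambda_{D,v}(P) + O_v(1)$ (which follows from $\phi_j = s_j/s_D$ and the $M_k$-boundedness of $\max_j \|s_j(P)\|_v$ off the base locus of $V$) and the identity $h(\Phi(P)) = \sum_v \log \max_j \|\phi_j(P)\|_v$, a short rearrangement gives, up to a standard normalization factor,
$$(1 - M\epsilon)\, h(\Phi(P)) \leq (M(m+1)+1)\, N_S(P, D) + O(1).$$
The residual $O(1)$ is absorbed by enlarging $Z$ with the finite (by Northcott) set of $P$ on which $h(\Phi(P))$ is bounded, and choosing $\epsilon$ small enough then yields the claimed inequality.

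The main obstacle is expected to be the second step: faithfully translating the pointwise, compactness-based boundedness of the Nevanlinna proof into a Weil-function inequality with an $M_k$-bounded correction. This is routine within Vojta's formalism, but requires choosing compatible integral models so that the corrections $\gamma_v$ and $O_v(1)$ vanish for all but finitely many $v$ and are bounded at the rest. This is the principal point at which the arithmetic proof has to depart from the purely analytic one; the remaining algebraic manipulations are formally identical to those in the proof of Theorem \ref{our_smt}.
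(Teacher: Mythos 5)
Your proposal follows essentially the same route as the paper's proof: extract a finite set of linear forms via Levin's Remark 8.2, apply Vojta's refinement of the Subspace Theorem to $F=\Phi\circ f$ (arithmetic analog of Theorem \ref{gen_smt}), transcribe the Zariski-local boundedness argument behind inequality \eqref{ess_v_l_boundedness} into an $M_k$-bounded Weil-function estimate (using a finite cover and the sections $s_j=\phi_j s_D$), split $h(\Phi(P))$ into an $S$-part and a complementary part bounded by $N_S(P,D)$, and absorb the $O(1)$ via Northcott. The only nominal difference is that where you assert $\log\max_j\|\phi_j(P)\|_v=\lambda_{D,v}(P)+O_v(1)$, the paper uses (and the argument only requires) the one-sided bound $\log\max_j\|\phi_j(P)\|_v\leq\lambda_{D,v}(P)+O_v(1)$ away from the base locus, which is also all that is readily available; otherwise the arguments coincide.
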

\begin{proof}
Let $I$ be defined as in the proof of Theorem \ref{our_smt}. Due to arguments analogous to those that gave the boundedness \eqref{ess_v_l_boundedness}, we have for $P\in X(k)$ not contained in the support of $D$:
$$ \sum_{\upsilon \in S} \max_j
\log \|\phi_j(P)\|_{\upsilon}^{m+1+\frac 1 M}
\leq \sum_{\upsilon \in S} \max_I \log \prod_{L\in I}\frac{\max_j 
 \|\phi_j(P)\|_{\upsilon}} {\|L(\Phi(P))\|_{\upsilon}} +\log C,$$
for some constant $C$. In the sequel, we will denote abuse and denote all constants with the same symbol $C$.\par
By Theorem \ref{schmidt},  for all $P\in X(k)$ outside a Zariski closed
variety  $Z$ of $X$,
$$\sum_{\upsilon \in S} \max_I \log \prod_{L\in I}\frac{\max_j 
 \|\phi_j(P))\|_{\upsilon}} {\|L(\Phi(P))\|_{\upsilon}} 
.\leq.  (m+1+\epsilon)
h(\Phi(P)).$$
Hence
$$
\sum_{\upsilon \in S} \max_j
\log \|\phi_j(P)\|_{\upsilon}^{m+1+\frac 1 M} .\leq. (m+1+\epsilon)
h(\Phi(P))+\log C.$$
Now, 
$$h(\Phi(P)) =\sum_{\upsilon \in S} \max_j
\log \|\phi_j(P)\|_{\upsilon}+ \sum_{\upsilon\not \in S} \max_j
\log \|\phi_j(P)\|_{\upsilon}.$$\par
We again use the fact that, due to the isomorphism \eqref{h0_isom}, there are sections 
$s_0,\ldots , s_m\in H^0(X,\OO_X(D))$ such that $\phi_j=\frac{s_j}{s_D}$, $j=0,\ldots,m$, where $s_D$ is a section of $\OO_X(D)$ with $\divisor (s) =D$. Moreover, we think of a section $s$ of $\OO_X(D)$ as given by a collection $(s_\beta)$ of functions pertaining to an open cover $(U_\beta)$ such that $s_\beta=h_{\beta\gamma}s_\gamma$ on $U_\beta\cap U_\gamma$, where $h_{\beta\gamma}$ are the nowhere vanishing transition functions of $\OO_X(D)$.\par
If $P\in U_\beta(\bar{k}_{\upsilon})$, then we can write 
$$\|\phi_j(P)\|_{\upsilon}=\frac{\|s_{j,\beta}(P)\|_{\upsilon}}{\|s_{D,\beta}(P)\|_{\upsilon}}$$
independently of $\beta$.
Thus
$$
 \sum_{\upsilon \in S} \max_j
\log \|\phi_j(P)\|_{\upsilon} - \sum_{\upsilon\not \in S} \log {\|s_{D,\beta}(P)\|_{\upsilon}
\over   \max_j
\|s_{j,\beta}(P)\|_{\upsilon}}
= h(\Phi(P)).$$
From the definition of the Weil-function $ \lambda_{D, \upsilon} (P)$,
since $D$ is represented by $s_{D, \beta}$ on $U_\beta$, 
$$\lambda_{D, \upsilon} (P) =-\log \|s_{D, \beta}(P)\|_{\upsilon}+\alpha(P),$$ 
where $\alpha$ is continuous on $U_\beta(\bar{k}_{\upsilon})$.
Since $\log  \max_j \|s_{j, \beta}(P)\|_{\upsilon}$ is bounded from above on $U_\beta(\bar{k}_{\upsilon})$, there exists a constant $C_\beta$ such that
\begin{align*}
- \log {\|s_{D, \beta}(P)\|_{\upsilon}
\over   \max_j
\|s_{j, \beta}(P)\|_{\upsilon}}
=&-\log \|s_{D, \beta}(P)\|_{\upsilon} + \log  \max_j \|s_{j, \beta}(P)\|_{\upsilon}\\
\leq&  -\log \|s_{D, \beta}(P)\|_{\upsilon}+\alpha(P)+C_\beta\\=&\lambda_{D, \upsilon} (P) +C_\beta .
\end{align*}
Therefore
$$\sum_{\upsilon \in S} \max_j
\log \|\phi_j(P)\|_{\upsilon} +  \sum_{\upsilon\not \in S}  \lambda_{D, \upsilon} (P) +C
\geq  h(\Phi(P)).$$
So we have
 \begin{align*}
 \left(m+1+{1\over M}\right) h(\Phi(P))
 \leq&
\sum_{\upsilon \in S} \max_j
\log \|\phi_j(P)\|_{\upsilon}^{m+1+\frac 1 M} \\
&+\left(m+1+{1\over M}\right)\sum_{\upsilon\not \in S} \lambda_{D, \upsilon} (P) +C\\
.\leq. & (m+1+\epsilon) 
h(\Phi(P))+ \left(m+1+{1\over M}\right)
\sum_{\upsilon\not \in S} \lambda_{D, \upsilon} (P) \\
&+C.
\end{align*}
By Northcott's Theorem, for any $\delta>0$, $\{Q\in \PP^m | h(Q)< \frac C \delta \}$ is a finite set. Thus, after possibly enlarging $Z$, we can continue to estimate
\begin{align*}
 &(m+1+\epsilon)  h(\Phi(P))+ \left(m+1+{1\over M}\right)
\sum_{\upsilon\not \in S} \lambda_{D, \upsilon} (P) +C\\
.\leq.& (m+1+\epsilon+\delta)  h(\Phi(P))+ \left(m+1+{1\over M}\right)
\sum_{\upsilon\not \in S} \lambda_{D, \upsilon} (P).
\end{align*}
After rearranging the inequality and choosing $\epsilon$ and $\delta$ (depending on $\epsilon$) small, we obtain the Theorem.
\end{proof}
In strict analogy to Corollary \ref{hyp_cor}, we obtain the following
\begin{corollary} Let $D$ be an essentially large divisor on a nonsingular projective variety $X$, both defined over a number field $k$. Let $S\subset M_k$ be a finite set of places containing all archimedean ones. Then any set of $(D,S)$-integral points is contained in a proper subvariety of $X$. In other words, $X\setminus D$ is quasi Mordellic.
 \end{corollary}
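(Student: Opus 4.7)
The plan is to mirror the proof of Corollary \ref{hyp_cor} in the arithmetic setting, substituting Theorem \ref{our_schmidt} for Theorem \ref{our_smt} and Northcott's theorem for the observation used there that the Nevanlinna characteristic of a nonconstant holomorphic map is unbounded. First I would reduce to the essentially \emph{very} large case, since the conclusion only refers to the support of $D$. Then fix a linear subspace $V\subseteq L(D)$ witnessing essential very largeness, a basis $\phi_0,\ldots,\phi_m$ of $V$, and form the rational map $\Phi=[\phi_0:\cdots:\phi_m]:X\dashrightarrow \PP^m$. The defining condition forces some product of the basis elements to vanish along a component of $D$, so $\Phi$ is nonconstant; moreover its indeterminacy locus is contained in the support of $D$ (as noted in the Remark after Definition \ref{ess_v_l_def}), hence $\Phi$ is regular at every point of any $(D,S)$-integral subset.

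Next I would apply Theorem \ref{our_schmidt} with a fixed $\epsilon>0$ to obtain a Zariski closed proper subvariety $Z\subsetneq X$ such that
$$h(\Phi(P))\leq (M(m+1)+1+\epsilon)\,N_S(P,D)$$
for all $P\in X(k)\setminus Z$. If $\Sigma\subseteq X(k)\setminus \mathrm{Supp}(D)$ is $(D,S)$-integral, then by Definition \ref{int_pt_def} the counting function $N_S(\,\cdot\,,D)$ is uniformly bounded on $\Sigma$, so $h(\Phi(P))$ is uniformly bounded on $\Sigma\setminus Z$. Northcott's theorem then yields finitely many points $Q_1,\ldots,Q_s\in \PP^m(k)$ such that $\Phi(\Sigma\setminus Z)\subseteq \{Q_1,\ldots,Q_s\}$, so
$$\Sigma\ \subseteq\ Z\ \cup\ \bigcup_{i=1}^s \Phi^{-1}(Q_i).$$
Because $\Phi$ is a nonconstant rational map out of $X$, each fiber $\Phi^{-1}(Q_i)$ is a proper closed subvariety of $X$, so the right-hand side is a proper closed subvariety of $X$, proving the corollary. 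For $\Sigma\subseteq X(\bar k)$ of bounded degree one argues identically after extending $k$ and $S$, since Northcott still applies.

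I do not foresee any serious obstacle: the hard work has already been carried out in Theorem \ref{our_schmidt}, and Northcott's theorem supplies exactly what is needed in place of the growth of $T_F(r)$. The only small technical points requiring attention are the well-definedness of $\Phi$ on $\Sigma$ (automatic, since the base locus lies in $\mathrm{Supp}(D)$) and its nonconstancy (forced by the essentially very large hypothesis, which guarantees that $V$ strictly contains the constants).
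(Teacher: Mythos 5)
Your proposal follows essentially the same route as the paper's proof: reduce to the essentially very large case, apply Theorem \ref{our_schmidt} to get a Zariski-closed exceptional set $Z$, use the boundedness of $N_S(\cdot,D)$ on an integral set $\Sigma$ to conclude $h(\Phi(P))$ is bounded on $\Sigma\setminus Z$, invoke Northcott to see $\Phi(\Sigma\setminus Z)$ is finite, and then absorb $\Sigma$ into $Z$ together with the preimage of that finite set under the nonconstant map $\Phi$. Your extra remarks on the regularity of $\Phi$ off $\mathrm{Supp}(D)$ and the bounded-degree case are harmless elaborations, not deviations.
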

\begin{proof}
We can assume without loss of generality that $D$ is essentially {\it very} large. Let $\Sigma$ be a set of $(D,S)$-integral points. According to Definition \ref{int_pt_def}, $N_S(x,D)$ is bounded over $\Sigma$. So, by Theorem \ref{our_schmidt}, $h(\Phi(P))$ is bounded for all $P\in \Sigma$ outside a Zariski closed variety $Z$ of $X$. This means that, due to Northcott's Theorem, the set $\{\Phi(P)|P\in (\Sigma \setminus Z)(k)\}$ is finite. Since $\Phi$ is a nonconstant morphism outside of $D$, the preimage of this finite set is a proper subvariety $Z_1$ of $X$. We have shown that $\Sigma\subset Z\cup Z_1$, which concludes the proof.
 \end{proof}
\section{Proofs of the sharp sufficient criteria for essentially large divisors}\label{section_ess_v_l_proofs}
As was stated in the Introduction, the filtration method from \cite{cz_ajm}, \cite{ru_ajm} can be used to prove the sharp bound for the number of components of a large divisor in the case of $X=\PP^q$ (Theorem \ref{thm_1}). We now give the proof. 
\begin{proof}[Proof of Theorem \ref{thm_1}]
The number field case being completely analogous (see \cite{cz_ajm}), we only prove the case when $K=\CC$.\par
Let $Q_1,\ldots,Q_r$ be homogeneous polynomials of degree $d_i$ such that $D_i=\{Q_i=0\}$. Let $P\in D$. After replacing each $Q_i$ by $Q_i^{d/d_i}$, where $d$ is the least common multiple of the $d_i$'s, we can assume that all $Q_i$ have the same degree $d$.  Without loss of generality, assume that precisely $q$ of the $D_i$ contain the point $P$, and let $\{\gamma_1,\ldots,\gamma_q\}$ be the unique set of distinct elements of $\{Q_1,\ldots,Q_r\}$ such that $P\in \cap_{i=1}^q\{\gamma_i=0\}$.\par
Let $V_N$ be the vector space of homogeneous polynomials of degree $N$ in $\CC[X_0,\ldots,X_q]$. With the lexicographical ordering on the $q$-tuples $\vec{i}=(i_1,\ldots,i_q)$ with $\sigma(\vec{i}):=\sum_{j=1}^q i_j\leq N/d$, we obtain a filtration on $V_N$ by letting
$$W_{\vec{i}}=\sum_{\vec{e}\geq \vec{i}}\gamma_1^{e_1}\ldots\gamma_q^{e_q}V_{N-d\sigma(\vec{e})}.$$ 
Note that clearly $W_{\vec{0}}=V_N$ and $W_{\vec{i}}\supseteq W_{\vec{i}'}$ for $\vec{i'}\geq \vec{i}$.
Combining Lemma 2.3 and Lemma 3.1 of \cite{cz_ajm}, we note the following Lemma for our situation.
\begin{lemma}\label{hilbert_lemma}
There exists an integer $N_0$ (depending only on $\gamma_1,\ldots,\gamma_q$) such that for all integers $N> N_0$ and for all $\vec{i}$ with $d\sigma(\vec{i}) < N-N_0$:
$$\Delta_{\vec{i}}:=\dim W_{\vec{i}}/W_{\vec{i'}}=d^q.$$
\end{lemma}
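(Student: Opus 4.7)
The plan is to identify the successive quotients $W_{\vec{i}}/W_{\vec{i}'}$ with graded pieces of $R/I$, where $R=\CC[X_0,\ldots,X_q]$ and $I=(\gamma_1,\ldots,\gamma_q)$, and then invoke the classical Hilbert function computation for a graded complete intersection. The essential algebraic input is that $\gamma_1,\ldots,\gamma_q$ form a regular sequence in $R$. This follows from the general position hypothesis: since $r\geq q+2$, there exist at least two further hypersurfaces among the $D_i$ not passing through $P$, and if $V(\gamma_1,\ldots,\gamma_q)\subset\PP^q$ were positive-dimensional then any hypersurface in $\PP^q$ would meet it, contradicting no-$(q+1)$-meet. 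Hence $V(\gamma_1,\ldots,\gamma_q)$ is zero-dimensional, the ideal $I$ has height $q$ in the Cohen--Macaulay ring $R$, and the sequence is regular.

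Writing $\gamma^{\vec{e}}:=\gamma_1^{e_1}\cdots\gamma_q^{e_q}$, I would first establish the decomposition $W_{\vec{i}}=\gamma^{\vec{i}}V_{N-d\sigma(\vec{i})}+W_{\vec{i}'}$, so that $W_{\vec{i}}/W_{\vec{i}'}$ is a quotient of $V_{N-d\sigma(\vec{i})}$ via the map $f\mapsto \gamma^{\vec{i}}f\pmod{W_{\vec{i}'}}$. Setting $\delta_j:=\gamma_1^{i_1}\cdots\gamma_{j-1}^{i_{j-1}}\gamma_j^{i_j+1}$ for $j=1,\ldots,q$, unwinding the lex order shows that $W_{\vec{i}'}$ coincides with the degree-$N$ component of the ideal $(\delta_1,\ldots,\delta_q)$: every $\gamma^{\vec{e}}$ with $\vec{e}>_{\mathrm{lex}}\vec{i}$ is a multiple of some $\delta_j$, and each $\delta_j$ itself occurs as a $\gamma^{\vec{e}}$ with $\vec{e}>_{\mathrm{lex}}\vec{i}$.

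Next I would identify the kernel of this surjection with $I_{N-d\sigma(\vec{i})}$. One direction is immediate: $\gamma^{\vec{i}}\cdot\gamma_j h=\gamma^{\vec{i}+e_j}h\in(\delta_j)$. For the converse, suppose $\gamma^{\vec{i}}f=\sum_{j=1}^q \delta_j g_j$. Canceling $\gamma_1^{i_1}$ (a nonzerodivisor in the domain $R$) and then reducing modulo $\gamma_1$ leaves an equation all of whose right-hand terms are multiples of $\gamma_2^{i_2}$; since $\gamma_2$ is a nonzerodivisor modulo $\gamma_1$, we may cancel $\gamma_2^{i_2}$ and reduce modulo $\gamma_2$. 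Iterating this cancel-and-reduce step $q$ times, using at each stage that $\gamma_j$ is a nonzerodivisor modulo $(\gamma_1,\ldots,\gamma_{j-1})$, yields $f\equiv 0\pmod{I}$. Thus $W_{\vec{i}}/W_{\vec{i}'}\cong (R/I)_{N-d\sigma(\vec{i})}$.

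The problem now reduces to the Hilbert function of the complete intersection $R/I$. Since $\gamma_1,\ldots,\gamma_q$ form a regular sequence of degree-$d$ forms in $q+1$ variables, the Koszul complex resolves $R/I$ and yields the Hilbert series
$$\sum_{k\geq 0}\dim(R/I)_k\, t^k=\frac{(1-t^d)^q}{(1-t)^{q+1}}=\frac{(1+t+\cdots+t^{d-1})^q}{1-t}.$$
The numerator has degree $q(d-1)$, so for $k\geq q(d-1)$ the coefficient of $t^k$ equals the value of the numerator at $t=1$, namely $d^q$. Setting $N_0:=q(d-1)$, the hypothesis $d\sigma(\vec{i})<N-N_0$ forces $N-d\sigma(\vec{i})>N_0$, giving $\Delta_{\vec{i}}=d^q$ as claimed. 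The main technical obstacle is the iterated cancellation in the kernel identification, where the regular sequence hypothesis is genuinely essential; the remaining steps are bookkeeping and standard Hilbert series theory.
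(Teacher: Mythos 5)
Your proof is correct. The paper itself supplies no argument for this lemma---it simply cites Lemmas 2.3 and 3.1 of Corvaja--Zannier \cite{cz_ajm}---and your reconstruction (deducing that $\gamma_1,\ldots,\gamma_q$ is a regular sequence from the general-position hypothesis, identifying $W_{\vec{i}}/W_{\vec{i}'}$ with $(R/I)_{N-d\sigma(\vec{i})}$ via the $(\delta_1,\ldots,\delta_q)$ description of $W_{\vec{i}'}$ and the iterated cancel-and-reduce argument, and reading off the eventual value $d^q$ of the Hilbert function from the Koszul resolution with the explicit $N_0=q(d-1)$) is precisely the standard argument underlying the cited results, so there is no substantive divergence in approach.
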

Now choose a basis $\psi_1,\ldots,\psi_m$ ($m={N+q\choose q}$) for $V_N$ with respect to the above filtration. For all $\nu=1,\ldots,m$, write
$$\psi_\nu=\gamma_1^{i_1}\ldots\gamma_q^{i_q}\gamma^{(\nu)}$$ 
for some $\gamma^{(\nu)}\in V_{N-d\sigma(\vec{i})}$ according to its place in the filtration.\par
For $\tilde N\in \NN$, let $N=\tilde N\cdot r \cdot d$ and write
$$H^0(\PP^q,\OO_{\PP^q}(\tilde N D))=H^0(\PP^q,\OO_{\PP^q}(\tilde N\cdot r \cdot d))=H^0(\PP^q,\OO_{\PP^q}(N))=V_N.$$
For $\nu=1,\ldots,m$, let
$$f_\nu=\frac{\psi_\nu}{Q_1^{\tilde N}\ldots Q_r^{\tilde N}}.$$ 
These rational functions clearly form a basis for $L(\tilde N D)$. To conclude the proof of the Theorem, we show that they satisfy
 $\ord_{E} \prod_{\nu=1}^{m} f_\nu > 0$ for any component $E$ in $D$ with $P\in E$. We assume that $E$ is contained in $D_{j_0}$. Then
$$
\frac 1 {\ord_E D} \ord_{E} \prod_{\nu=1}^{m} f_\nu = \left(\sum_{\vec{i}}\Delta_{\vec{i}}i_{j_0}\right)-\tilde N {N+q\choose q}.$$
Since the number of nonnegative integer $m$-tuples with 
sum $\leq t$ is equal to the number of nonnegative integer 
$(m+1)$-tuples with sum exactly $t$ which is 
${t+m\choose m}$, and since the sum below is independent of 
$j$, we have that, for 
$N$ divisible by $d$ and for every $j$, 
\begin{align*}
\sum_{\vec{i}} i_j&={1\over q+1}\sum_{\hat{\vec{i}}} \sum_{j=1}^{q+1}
i_j ={1\over q+1} \sum_{\hat{\vec{i}}}  {N\over d}\\
&={1\over q+1} {N/d+q\choose q}{N\over d}
={N^{q+1}\over d^{q+1}(q+1)!}+O(N^q),
\end{align*}
where $\displaystyle{\sum_{\hat{\vec{i}}}}$ is taken over all nonnegative integer $(q+1)$-tuples with sum exactly $N/d$. 
Combining it with Lemma \ref{hilbert_lemma},
we have, for every $1 \leq j_0 \leq q$, 
$$\sum_{\vec{i}}\Delta_{\vec{i}}i_{j_0} ={N^{q+1}\over d(q+1)!}+O(N^q).$$
Thus, 
\begin{align*}
\frac 1 {\ord_E D} \ord_{E} \prod_{\nu=1}^{m} f_\nu &= \left(\sum_{\vec{i}}\Delta_{\vec{i}}i_{j_0}\right)
-\tilde N {N+q\choose q}\\&=\left(\frac{N^{q+1}}{d\cdot (q+1)!}+O(N^q)\right)- \frac N{dr} \left(\frac {N^q} {q!}+O(N^{q-1})\right)\\
&=\left(\frac{1}{q+1}-\frac{1}{r}\right) \frac{N^{q+1}}{d\cdot q!} + O(N^q).
\end{align*}
For $N$ sufficiently large, this is positive if $r \geq q+2$.
\end{proof}\par
\begin{remark} We remark that under the assumptions of Theorem \ref{thm_1}, the paper \cite{ru_ajm}, using the same methods from \cite{cz_ajm}, obtained a sharp Second Main Theorem that is stronger than Theorem \ref{our_smt}. The same can be said for Theorem \ref{thm_2} and the paper \cite{ru_annals}, which is in turn based on \cite{ferretti_compositio}, \cite{ef_imrn}. On the other hand, Theorem \ref{our_smt} does not require these assumptions, so it is still a new result.\end{remark}

We continue with the proof of Theorem \ref{thm_2}, based on the methods of \cite{ferretti_compositio}, \cite{ef_imrn} and  \cite{ru_annals}.
\begin{proof}[Proof of Theorem \ref{thm_2}]
The number field case being completely analogous (see \cite{ferretti_compositio}), we only prove the case when $K=\CC$.\par
Let $Q_1,\ldots,Q_r$ be homogeneous polynomials such that $D_i=\{Q_i=0\}\cap X$.
As above, we can assume that the degrees of the homogeneous polynomials $Q_i$ equal $d$ for all $i$. Let
$$\varphi: X\to \PP^{r-1}, \quad x\mapsto [Q_1(x),\ldots,Q_r(x)].$$
Let $Y:=\varphi(X)$. By the general position assumption, $\varphi$ is a finite holomorphic map $X\to Y$. \par
On $\PP^{r-1}$, we have for all $N\in \NN$ a short exact sequence 
$$0\to \II_Y(N)\to \OO_{\PP^{r-1}}(N)\to \OO_Y(N)\to 0.$$
The beginning of the corresponding long exact sequence reads
$$0\to H^0(\PP^{r-1},\II_Y(N))\to H^0(\PP^{r-1},\OO_{\PP^{r-1}}(N))\stackrel {\tau}{\to} H^0(Y,\OO_Y(N)),$$
where $\tau$ denotes the restriction map. We let 
\begin{align*}
W_N&:=\image(\tau)\\& \cong H^0(\PP^{r-1},\OO_{\PP^{r-1}}(N))/ \ker(\tau)\\ &\cong H^0(\PP^{r-1},\OO_{\PP^{r-1}}(N))/H^0(\PP^{r-1},\II_Y(N))\\ &\cong\CC[Y_0,\ldots,Y_{r-1}]_N/(I_Y)_N,
\end{align*}
where $(I_Y)_N$ denotes the set of those homogeneous polynomials of degree $N$ vanishing on $Y$. 
Let $V_N:=\varphi^*(W_N)\subseteq H^0(X,\OO_X(ND))$.\par
Since $\varphi:X\to Y$ is a finite surjective holomorphic map,
\begin{align*}
\dim(V_N)=\dim (W_N)=\dim \CC[Y_0,\ldots,Y_{r-1}]_N/(I_Y)_N=H_Y(N),
\end{align*}
where $H_Y(N)$ is the Hilbert function of $Y$.\par
Let $P\in D$. Without loss of generality, we assume again that there are distinct 
$Q_{i_1},\ldots,Q_{i_q} \in \{Q_1,\ldots,Q_r\}$ such that $P\in \cap_{j=1}^q\{Q_{i_j}=0\}$.\par
Let $\vec{c}=(c_1, \dots, c_r)$ be the 
$r$-vector whose $i_j$-th entry ($1 \leq j \leq q$) is $1$ and 
elsewhere is $0$.
Let $\vec{y}^{\vec{a}^{(1)}},\ldots,\vec{y}^{\vec{a}^{(H_Y(N))}}$ be monomials such that their equivalence classes in $\CC[Y_0,\ldots,Y_{r-1}]_N/(I_Y)_N$ give a basis and such that
$$S_Y(N,\vec{c})=\sum_{i=1}^{H_Y(N)} \vec{a}^{(i)} \bullet \vec{c},$$
where $S_Y(N,\vec{c})$ is the $N$-th Hilbert weight and the bullet denotes the usual dot product. Recall that the  $N$-th Hilbert weight
 is given by 
$$S_Y(N,\vec{c})=\max \sum_{i=1}^{H_Y(N)} \vec{a}^{(i)} \bullet \vec{c},$$
where the maximum is taken over  all sets of monomials
  $\vec{y}^{\vec{a}^{(1)}},\ldots,\vec{y}^{\vec{a}^{(H_Y(N))}}$ whose residue class modulo
$I_Y$ form a basis of $\CC[Y_0,\ldots,Y_{r-1}]_N/(I_Y)_N$.
For $\nu=1,\ldots,H_Y(N)$, and $N$ a multiple of $r$, let 
$$f_\nu = \frac{Q_1^{a_1^{(\nu)}}\ldots Q_r^{a_r^{(\nu)}}}{Q_1^{N/r}\ldots Q_r^{N/r}}|_X.$$
These functions form a basis for $V_N$ understood as a subspace of $L(ND)$. To conclude the proof of the Theorem, we show that they satisfy
 $\ord_{E} \prod_{\nu=1}^{H_Y(N)} f_\nu > 0$ for any component $E$ in
 $D$ with $P\in E$. We assume that $E$ is contained in $D_{j_0}$. \par
We recall two basic lemmas in \cite{ru_annals}:
\begin{lemma} Let $X \subset {\PP}^{N}_{\CC}$ 
be an algebraic variety of 
dimension $n$ and degree $\bigtriangleup$. Let $m>\bigtriangleup$ 
be an integer and
let   ${\vec c}=(c_0, \dots, c_N) \in {\Bbb R}^{N+1}_{\ge 0}$.
Then
$$ {1\over mH_X(m)}S_X(m, {\vec c})
\ge {1\over (n+1)\bigtriangleup} e_X({\vec c}) - {(2n+1)\bigtriangleup
\over m}\cdot
\left(\max_{i=0, \dots, N}c_i\right),$$
where ${1\over (n+1)\bigtriangleup} e_X({\vec c})$ is the normalized Chow weight of $X$ with respect to $\vec{c}$.
\end{lemma}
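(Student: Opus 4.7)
The plan is to prove this as an explicit quantitative version of Mumford's comparison between the Hilbert weight and the Chow weight associated to a one-parameter subgroup action on $\PP^N$. Let $\lambda(t)$ be the one-parameter subgroup of $\mathrm{GL}_{N+1}$ acting diagonally by $Y_i \mapsto t^{c_i} Y_i$. This action induces a grading on $\CC[Y_0,\ldots,Y_N]_m$ by $\vec c$-weight, which descends to a filtration of the quotient $\CC[Y_0,\ldots,Y_N]_m / (I_X)_m$. By definition, $S_X(m,\vec c)$ is the sum of weights of any monomial basis chosen compatibly with this filtration (and the maximum is achieved by taking highest-weight monomials at each graded step), while $e_X(\vec c)$ is (up to normalization) the $\vec c$-weight of the Chow form $R_X$ under the lifted action of $\lambda(t)$.

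First, I would recall Mumford's asymptotic identity, namely that as $m \to \infty$,
$$\frac{(n+1)!\, S_X(m,\vec c)}{m^{n+2}} \longrightarrow e_X(\vec c),$$
which follows from the fact that $S_X(m,\vec c)$ is, for large $m$, the sum over lattice points in the cone $\{\vec a \in \ZZ_{\ge 0}^{N+1} : |\vec a| = m\}$ (modulo a bounded defect coming from $(I_X)_m$) of $\vec a \bullet \vec c$, while the Chow form detects exactly the asymptotic top-degree contribution of this sum after dividing by $I_X$. Combining this with the Hilbert polynomial expansion $H_X(m) = \frac{\Delta}{n!}\, m^n + O(m^{n-1})$ already gives the main term $\frac{1}{(n+1)\Delta} e_X(\vec c)$ for $\frac{1}{m H_X(m)} S_X(m,\vec c)$ in the limit; the whole content of the lemma is to make the error in this comparison completely effective, with the error bounded by $\frac{(2n+1)\Delta}{m}\max_i c_i$.

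To produce the explicit error, I would argue as follows. By homogeneity in $\vec c$ and by additivity of both $S_X$ and $e_X$ under $\vec c \mapsto \vec c + t(1,\ldots,1)$, one may normalize so that $\min_i c_i = 0$ and write $c := \max_i c_i$. Cut $X$ by generic hyperplanes and use the standard filtration of $X$ by linear sections to perform induction on the dimension $n$: at each induction step, a transverse hyperplane section reduces the dimension by one and the degree stays $\le \Delta$; the defect introduced in the Hilbert weight at each step is bounded by $\Delta \cdot c$ per degree, since the number of monomials failing to restrict to a basis modulo $I_X$ at degree $m$ is bounded in terms of $\Delta$ (via Bezout and the fact that $m > \Delta$ makes the Hilbert function agree with the Hilbert polynomial in this range). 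Tracking the accumulated defect through the $n+1$ induction steps, together with the single comparison at dimension zero (where the Hilbert and Chow weights are directly computable on the length-$\Delta$ scheme of points), yields an error of at most $(2n+1)\Delta\, c$ between $S_X(m,\vec c)$ and its leading-order expression, and dividing by $m H_X(m)$ produces the stated bound.

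The main obstacle is pinning down the constant $(2n+1)\Delta$. The asymptotic identity alone gives error $o(1)$ with no control on the implied constants, so the substance of the lemma is in the effective bookkeeping: one must show that the extra monomials needed to complete a basis modulo $(I_X)_m$ contribute at most $\Delta \cdot c$ per dimension-reduction step, and that the passage from $m H_X(m)$ to $(n+1)\Delta \cdot \frac{m^{n+1}}{n!}$ contributes a further bounded error. This is the sharp part of the argument and is exactly where the hypothesis $m > \Delta$ is used — without it, the Hilbert function can deviate from the Hilbert polynomial in ways that spoil the clean linear dependence of the error on $\max_i c_i$.
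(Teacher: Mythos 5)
The paper does not actually prove this lemma: it is quoted verbatim from \cite{ru_annals} (where it is itself a restatement of a result of Evertse and Ferretti \cite{ef_imrn}, built on Mumford's work on stability), and the authors simply cite it. So there is no in-paper proof for me to compare your sketch against. With that understood, here is an assessment of your plan on its own merits.

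Your top-level picture is correct: Mumford's theorem gives $S_X(m,\vec c) = \frac{e_X(\vec c)}{(n+1)!}m^{n+1} + O(m^n)$ while $H_X(m) = \frac{\Delta}{n!}m^n + O(m^{n-1})$, so the quotient converges to $\frac{1}{(n+1)\Delta}e_X(\vec c)$, and the content of the lemma is precisely an effective bound on the error. However, the mechanism you propose for the effective bound has two genuine problems. First, the claim that $m>\Delta$ forces $H_X(m)$ to agree with the Hilbert polynomial is not true in general; the relevant threshold is the Castelnuovo--Mumford regularity, which for an arbitrary variety of degree $\Delta$ is not bounded by $\Delta$. Even in cases where regularity bounds of Eisenbud--Goto type are available, they do not produce the clean claim you are leaning on, and in any event the actual Evertse--Ferretti proof does not use $m>\Delta$ for that purpose. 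Second, and more seriously, the proposed induction by generic hyperplane sections is not a workable route here: there is no simple relation expressing $S_{X\cap H}(m,\vec c)$ in terms of $S_X(m,\vec c)$, nor $e_{X\cap H}(\vec c)$ in terms of $e_X(\vec c)$, so there is no way to ``track the defect'' across dimension-reduction steps as you describe. The comparison between Hilbert weight and Chow weight in the actual proof goes the other way: one degenerates $X$ under the one-parameter subgroup $\lambda(t)$ to its flat limit (whose ideal is the initial ideal of $I_X$ for the $\vec c$-weight order and which has the same Hilbert function as $X$), reinterprets $e_X(\vec c)$ as a degree of an auxiliary variety attached to this degeneration, and then compares Hilbert function to degree using Macaulay-type combinatorics. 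The hypothesis $m>\Delta$ enters there, not via equality of the Hilbert function with the Hilbert polynomial. To turn your sketch into a proof you would need to abandon the hyperplane-section induction and replace it with this flat-degeneration comparison, after which the explicit constant $(2n+1)\Delta$ falls out of the Macaulay-style bookkeeping rather than from a regularity estimate.
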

We will not give a definition of the normalized Chow weight here (see \cite{ru_annals}). For our purposes, all we need is the following estimate from below.
 \begin{lemma} 
Let $Y$ be a subvariety of ${\Bbb P}^{q-1}_{\CC}$ of dimension $n$ and degree 
$\bigtriangleup$. Let ${\vec c}=(c_1, \dots, c_q)$ be a tuple of
positive reals. 
Let $\{i_0, \dots, i_n\}$ be a subset of $\{1, \dots, q\}$ such that 
$$Y\cap \{y_{i_0}=0, \dots, y_{i_n}=0\}=\emptyset. $$
Then 
$$e_Y({\vec c})\ge (c_{i_0}+\cdots +c_{i_n})\cdot \bigtriangleup. $$
\end{lemma}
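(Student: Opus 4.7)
The plan is to prove the lower bound on $e_Y(\vec c)$ by exhibiting a single explicit monomial of the Chow form of $Y$ whose weight under the one-parameter subgroup $\lambda_{\vec c}$ already attains the claimed value, and then showing that the coefficient of this monomial is forced to be nonzero by the geometric emptiness hypothesis.

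Recall that the Chow form $F_Y$ of $Y\subseteq\PP^{q-1}$ is a polynomial in $n+1$ blocks of $q$ variables $u_j=(u_{j,1},\ldots,u_{j,q})$, $j=0,\ldots,n$, multihomogeneous of degree $\bigtriangleup$ in each block, with the defining property that $F_Y(u_0,\ldots,u_n)=0$ precisely when the $n+1$ hyperplanes $H_j=\{\sum_k u_{j,k}y_k=0\}$ have a common point on $Y$. Under the torus action $u_{j,k}\mapsto t^{c_k}u_{j,k}$, the Chow weight $e_Y(\vec c)$ is the largest weight $\sum_{j,k}(\text{exponent of }u_{j,k})\cdot c_k$ attained by a monomial that appears with nonzero coefficient in $F_Y$.

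First, I would single out the candidate monomial $M=\prod_{j=0}^{n}u_{j,i_j}^{\bigtriangleup}$, whose weight under $\lambda_{\vec c}$ is manifestly $\bigtriangleup(c_{i_0}+\cdots+c_{i_n})$. Next, to verify that $M$ actually appears with nonzero coefficient in $F_Y$, I would specialize $u_{j,k}=0$ for every $k\neq i_j$ (carried out separately in each block $j$). By multihomogeneity of degree $\bigtriangleup$ in each block, the only monomial that can survive this specialization is $M$ itself, so the specialization of $F_Y$ takes the form $c\cdot\prod_{j=0}^{n}u_{j,i_j}^{\bigtriangleup}$ for a scalar $c$ which is exactly the coefficient of $M$ in $F_Y$. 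But this specialization amounts to evaluating $F_Y$ at the tuple of hyperplanes $H_j=\{y_{i_j}=0\}$, so it vanishes identically in the remaining variables iff these $n+1$ hyperplanes have a common point on $Y$; the hypothesis of the Lemma excludes exactly this. Therefore $c\neq 0$, $M$ contributes to $e_Y(\vec c)$, and $e_Y(\vec c)\ge\bigtriangleup(c_{i_0}+\cdots+c_{i_n})$.

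The only real subtlety is keeping the sign and normalization conventions for the Chow weight consistent with those of the preceding Hilbert--Chow comparison lemma; once these are pinned down, the argument is essentially a direct calculation exploiting multihomogeneity together with the defining property of the Chow form.
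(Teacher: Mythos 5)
Your proof is correct and follows the standard argument. Note that the paper itself does not prove this lemma; it is recalled from \cite{ru_annals}, where it traces back to Evertse--Ferretti (\cite{ef_imrn}). The key ideas in your write-up --- singling out the monomial $\prod_{j=0}^{n}u_{j,i_j}^{\bigtriangleup}$, using multihomogeneity of degree $\bigtriangleup$ in each block so that the specialization $u_{j,k}=0$ ($k\neq i_j$) kills everything except this monomial, and invoking the defining property of the Chow form to identify the surviving coefficient with (non)vanishing at the tuple of coordinate hyperplanes, which is governed precisely by whether $Y\cap\{y_{i_0}=\dots=y_{i_n}=0\}=\emptyset$ --- are exactly those of the original proof. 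Your closing caveat about normalization of $e_Y(\vec c)$ is appropriate, since the paper deliberately does not spell out the definition; with the usual Evertse--Ferretti convention (maximal $\sum_{j,k}a_{j,k}c_k$ over monomials $\prod u_{j,k}^{a_{j,k}}$ with nonzero coefficient in $F_Y$), your argument closes cleanly.
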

We now continue our proof. With our chosen $\vec{c}$ and $\vec{a}^{(i)}$, using Lemmas 3.3 
and 3.4 (notice the condition that $Q_1, \dots, Q_r$ are in general position), and the symmetry property of the $\vec{a}^{(1)},\ldots,\vec{a}^{(H_Y(N))}$ (\cite[p. 61]{mumford_enseign}),
\begin{align*}
{1\over \ord_E D}\ord_E \prod_{\nu=1}^{H_Y(N)} f_\nu &= \left(\sum_{\nu=1}^{H_Y(N)} a_{j_0}^{(\nu)}\right)-\frac{N}{r}H_Y(N)\\
&=\frac 1 q \left(\sum_{\nu=1}^{H_Y(N)}  \vec{a}^{(\nu)} \bullet \vec{c}\right)- \frac{N}{r}H_Y(N)\\
&=\frac 1 q S_Y(N,\vec{c})- \frac{N}{r}H_Y(N)\\
&\ge\frac 1 q \frac 1 {q+1} N H_Y(N)(\sum_{j=1}^q  c_{i_j})-O(H_Y(N))- \frac{N}{r}H_Y(N)\\
&=\frac 1 {q+1} N H_Y(N) - \frac{N}{r}H_Y(N)-O(H_Y(N))\\
&=(\frac 1 {q+1} - \frac{1}{r})NH_Y(N)- O(H_Y(N)).\\
\end{align*}
For $N$ sufficiently large, this is positive if $r \geq q+2$.
\end{proof}

\section{Essentially large divisors on projective varieties of small codimension}\label{section_alg_geom}
Our ultimate (and so far unreached) goal is of course to find a (sharp) lower bound for the number of (say ample) components in general position necessary for a divisor on a nonsingular projective variety to be essentially large (cf.\ Conjecture \ref{conjecture}). For a general nonsingular projective variety, it is clearly not true that all effective divisors are cut out by hypersurfaces in the ambient projective space, as assumed in Theorem \ref{thm_2}. In this final Section, we exhibit a class of nonsingular projective varieties  for which this is the case and to which Theorem \ref{thm_2} can thus be applied. Consequently, the above-mentioned ultimate goal has been achieved for these varieties.\par
We refer the reader to \cite{hartshorne_bull_ams} or \cite[Section 3.2]{PAGI} for more on the algebraic geometry behind this question, which is essentially the problem of extending the Lefschetz Hyperplane Theorem to varieties that are not complete intersections. In particular, the theorems of Barth \cite{barth_ajm}, Larsen \cite{larsen_invent}, and, in the case of a general ground field of characteristic zero, Ogus \cite{ogus_annals} apply. For our purposes, we simply state the following Proposition, which is an immediate consequence of these theorems.
\begin{proposition} Let $X\subset \PP^\ell$ be a nonsingular projective variety of dimension $q$, defined over $K$. If $2q - \ell \geq 2$, then restriction yields an isomorphism
$$\Pic(\PP^\ell)\stackrel{\cong}{\to} \Pic(X).$$
\end{proposition}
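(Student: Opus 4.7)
The plan is to deduce the Picard group isomorphism from the cohomological/topological comparison theorems cited in the introduction to this section, combined with a diagram chase involving the exponential exact sequence.

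First I would reduce to the complex case. Over $\CC$, the Barth--Larsen theorem asserts that if $X \subset \PP^\ell_{\CC}$ is a nonsingular projective subvariety of dimension $q$, then the inclusion $X \hookrightarrow \PP^\ell$ induces isomorphisms $\pi_i(X) \to \pi_i(\PP^\ell)$ for $i \leq 2q - \ell$, and hence isomorphisms on integral cohomology in the same range. Taking $i = 1,2$ and using the hypothesis $2q - \ell \geq 2$, I obtain $H^1(X,\ZZ) = 0$ and an isomorphism $H^2(\PP^\ell,\ZZ) \stackrel{\cong}{\to} H^2(X,\ZZ)$. The Hodge-theoretic refinement of Barth--Lefschetz (or a direct appeal to the Kodaira--Akizuki--Nakano vanishing via the ampleness of $\OO_X(1)$ together with the codimension bound) yields the vanishing $H^1(X,\OO_X) = H^2(X,\OO_X) = 0$, matching the analogous (trivial) vanishings on $\PP^\ell$.

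Next I would compare the exponential sequences on $\PP^\ell$ and on $X$ in the commutative diagram
\begin{equation*}
\begin{CD}
H^1(\PP^\ell,\OO) @>>> \Pic(\PP^\ell) @>{c_1}>> H^2(\PP^\ell,\ZZ) @>>> H^2(\PP^\ell,\OO) \\
@VVV @VVV @VVV @VVV \\
H^1(X,\OO_X) @>>> \Pic(X) @>{c_1}>> H^2(X,\ZZ) @>>> H^2(X,\OO_X).
\end{CD}
\end{equation*}
By the previous paragraph, the first, third and fourth vertical arrows are isomorphisms (indeed the outer ones are $0 \to 0$), so the five lemma gives the desired isomorphism $\Pic(\PP^\ell) \stackrel{\cong}{\to} \Pic(X)$. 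For a general ground field $K$ of characteristic zero, the same conclusion can be obtained either by the Lefschetz principle applied to a finitely generated subfield of $K$ and a specialization argument, or more cleanly by invoking Ogus's theorem, which provides the analogous comparison of Picard groups directly in this algebraic setting.

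The only potentially delicate point is the vanishing of the coherent cohomology groups $H^i(X,\OO_X)$ for $i = 1,2$, since Barth--Larsen in its bare topological form only controls integral cohomology. I expect this to be the main obstacle, but it is handled by the Hodge-theoretic sharpening of Barth--Larsen (equivalently, one can extract it directly from the Kodaira--Akizuki--Nakano vanishing theorem applied to restrictions of $\OO_{\PP^\ell}(1)$ to $X$), which is standard in the references cited in the text.
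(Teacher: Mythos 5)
Your proof is correct and is essentially the same approach as the paper's, which simply cites Barth, Larsen and Ogus and states the Proposition as an immediate consequence; the exponential--sequence diagram chase you spell out is exactly the standard derivation (and appears in \cite{PAGI}, Section 3.2). Two small remarks: the coherent vanishing $H^1(X,\OO_X)=H^2(X,\OO_X)=0$ is not a ``delicate point'' --- it drops out of Hodge symmetry applied to the rational Barth theorem, since $H^1(X,\QQ)=0$ and $H^2(X,\QQ)\cong\QQ$ is spanned by the $(1,1)$-class of the hyperplane section, so the appeal to Kodaira--Akizuki--Nakano is unnecessary; and to invoke the five lemma cleanly your diagram should be extended one step to the left by the (vanishing) groups $H^1(\PP^\ell,\ZZ)$ and $H^1(X,\ZZ)$.
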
\par
Since $\Pic(\PP^\ell)=\ZZ$, this Proposition gives the following Corollary to Theorem \ref{thm_2}.
\begin{corollary}\label{cor}
Let $q\geq 1$ and $r\geq q+2$ be integers. Let $X\subseteq \PP^\ell$ be a nonsingular projective variety of dimension $q$, defined over $K$. Assume that $2q - \ell \geq 2$ holds. Let $D=\sum_{i=1}^r D_i$ be an effective divisor on $X$ defined over $K$ such that the $D_i$ are in general position. Then $D$ is essentially large.
\end{corollary}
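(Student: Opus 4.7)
The plan is to reduce to Theorem \ref{thm_2} by replacing $D$ with a sufficiently high multiple $nD$ whose components are literally cut out on $X$ by hypersurfaces in $\PP^\ell$. The Proposition stated just above identifies $\Pic(X)$ with $\Pic(\PP^\ell)=\ZZ$ via restriction, so for each effective (nonzero) component $D_i$ one has $\OO_X(D_i) \cong \OO_X(d_i) := \OO_{\PP^\ell}(d_i)|_X$ for some positive integer $d_i$ (positivity because $\OO_X(D_i)$ carries a nonzero section $s_{D_i}$ and is nontrivial). The point is that this Picard identification only extends the line bundle to $\PP^\ell$; it does not yet extend a particular defining section, which is what Theorem \ref{thm_2} actually demands.

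To extend the defining section, I would invoke Serre vanishing on $\PP^\ell$: there exists $n_0$ with $H^1(\PP^\ell, \II_X(m)) = 0$ for all $m \geq n_0$, where $\II_X$ is the ideal sheaf of $X$ in $\PP^\ell$. Picking a positive integer $n$ with $n d_i \geq n_0$ for every $i$, the long exact cohomology sequence of
\[0 \to \II_X(nd_i) \to \OO_{\PP^\ell}(nd_i) \to \OO_X(nd_i) \to 0\]
shows that the restriction $H^0(\PP^\ell, \OO_{\PP^\ell}(nd_i)) \to H^0(X, \OO_X(nd_i))$ is surjective over $K$. Hence the section $s_{D_i}^n \in H^0(X, \OO_X(nd_i))$, whose zero divisor on $X$ is $nD_i$, lifts to a $K$-rational homogeneous polynomial $Q_i$ of degree $nd_i$ satisfying $nD_i = \{Q_i = 0\} \cap X$.

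Assembling $nD = \sum_{i=1}^r nD_i$, I would then apply Theorem \ref{thm_2}: each $nD_i$ is the restriction of the hypersurface $\{Q_i = 0\}$ by construction, and the general position hypothesis passes from the $D_i$ to the $nD_i$ verbatim, since the two families have identical supports and any $(q+1)$-fold intersection depends only on support. Theorem \ref{thm_2} therefore yields that $nD$ is essentially large, and because $D$ and $nD$ have the same support, Definition \ref{ess_v_l_def} immediately gives that $D$ itself is essentially large. The only real obstacle in this argument is the section-extension step, cleanly handled by Serre vanishing; the passage to a multiple $nD$ is cost-free precisely because essential largeness is a property of the support alone.
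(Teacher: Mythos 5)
Your proof is correct, and it fills in a step that the paper itself leaves implicit. The paper presents Corollary \ref{cor} as an ``immediate'' consequence of the Proposition (Barth--Larsen: $\Pic(\PP^\ell)\stackrel{\cong}{\to}\Pic(X)$) together with Theorem \ref{thm_2}, but you correctly observe that the Picard isomorphism only matches up \emph{line bundles}, not defining \emph{sections}, whereas Theorem \ref{thm_2} demands that each $D_i$ literally be cut out on $X$ by a homogeneous polynomial. Surjectivity of the restriction $H^0(\PP^\ell,\OO_{\PP^\ell}(d_i))\to H^0(X,\OO_X(d_i))$ is not automatic in the given degree $d_i$ (it would follow from projective normality of $X$, which is not known unconditionally in small codimension), so your move of passing to a sufficiently divisible multiple $nD$, invoking Serre vanishing for $\II_X$, lifting $s_{D_i}^n$ to a polynomial $Q_i$ with $nD_i=\{Q_i=0\}\cap X$, applying Theorem \ref{thm_2} to $nD$, and then using that essential largeness depends only on the support, is exactly what is needed to make the reduction rigorous. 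This is the argument the authors surely have in mind; you have simply spelled out the nontrivial section-extension step rather than treating the Corollary as a one-line deduction.
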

We conclude by pointing out that the condition $2q - \ell \geq 2$ is not that restrictive. It is well-known that, by general linear projections, any nonsingular projective variety of dimension $q$ can be embedded in $\PP^\ell$ with $\ell=2q+1$. Thus, for a generic nonsingular projective variety, one can always find an embedding with $2q-\ell=-1$, which is quite close to the above condition. In particular, Corollary \ref{cor} applies to many interesting special projective varieties such as hypersurfaces, appropriate complete intersections, and certain Grassmannians, such as $G(2,5)$ embedded into $\PP^{9}$ under the Pl\"ucker embedding. Note that the latter is not a complete intersection due to B\'ezout's Theorem, because its degree is (the prime number) $5$, while it is not contained in any hyperplane.

\end{document}